\newtheorem{theorem}{Theorem}[section]
\newtheorem{cor}[theorem]{Corollary}
\newtheorem{lemma}[theorem]{Lemma}
\newtheorem{prop}[theorem]{Proposition}
\newtheorem{remark}[theorem]{Remark}
\theoremstyle{definition}
\newtheorem{definition}[theorem]{Definition}
\newtheorem*{claim}{Claim}
\newenvironment{proof_claim}{\paragraph{Proof of Claim:}}{\hfill$\star$ \vspace{1ex}}
\definecolor{magenta}{RGB}{203,0,150}
\definecolor{blueish}{RGB}{0,35,211}
\definecolor{greenish}{RGB}{0,100,20}
\newcommand{\h}{\hspace{2mm}}  
 \title{Morse Geodesics in Torsion Groups}
 \date{\today}
 \author{Elisabeth Fink}
\begin{document}

\selectlanguage{british}

\maketitle

\begin{abstract}
In this paper we exhibit Morse geodesics, often called ``hyperbolic directions'', in infinite unbounded torsion groups. The groups studied are lacunary hyperbolic groups and constructed using graded small cancellation conditions. In all previously known examples, Morse geodesics were found in groups which also contained \emph{Morse elements}, infinite order elements whose cyclic subgroup gives a Morse quasi-geodesic. Our result presents the first example of a group which contains Morse geodesics but no Morse elements. In fact, we show that there is an isometrically embedded $7$-regular tree inside such groups where every infinite, simple path is a Morse geodesic. 
\end{abstract}

\section{Introduction}

A geodesic is called \emph{Morse}, if quasi-geodesics between points on it stay uniformly close. They are named after H.M.\;Morse and were introduced in \cite{morse_surfaceGenus}. The Morse Lemma for hyperbolic groups says that in a hyperbolic group every quasi-geodesic is Morse. The connection between Morse geodesics and asymptotic cones allowed C.\;Drutu and M.\;Sapir (\cite{drutu_sapir}) to conclude that relative hyperbolicity is preserved under quasi-isometries. Recently Morse geodesics have been shown to exist in acylindrically hyperbolic groups by A.\;Sisto (\cite{sisto_16}), a large class of groups introduced by D.\;Osin in \cite{osin_acylindrical} to formally unify several classes of groups. This generalized previous results about Morse geodesics in some classes of groups (\cite{behrstock}, \cite{AK}, \cite{hamenstadt_hensel}, \cite{KLP}). In \cite{cordes} the Morse boundary was introduced and further studied in \cite{cordes1}, \cite{cordes2}. Further, a classification of Morse geodesics via their divergence behavior has been studied in for example \cite{drutu_mozes_sapir}, \cite{ACGH}, \cite{tran}.

\medskip 

In any previously known example, it was possible to show the existence of Morse geodesics via infinite order elements, called \emph{Morse elements}. The embedding of the cyclic subgroup generated by such a Morse element is a Morse quasi-geodesic in the Cayley graph. In hyperbolic groups, any infinite order element is Morse. Morse elements must necessarily have infinite order. A clear obstruction to having Morse elements is when a group is a torsion group. Hence it is a natural question to ask, whether Morse geodesics can exist in groups without Morse elements. We answer this question affirmatively by exhibiting Morse geodesics in infinite, unbounded torsion groups coming from a graded small cancellation construction in \cite{osin_lacunary}.
In fact, we exhibit an entire isometrically embedded regular tree such that every infinite, simple path on it is a Morse geodesic. \begin{theorem}
Let $G$ be a group which satisfies a graded small cancellation condition, such that $G = \lim_{i \rightarrow \infty} G_i$ with $G_0 = F_4$. Then $G$ contains an isometrically embedded $7$-regular tree $T$, where every bi-infinite simple path is a Morse geodesic.
\end{theorem}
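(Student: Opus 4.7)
The plan is to realise $T$ as the Cayley graph of a subgroup $H \le G$ isomorphic to the free product $(\mathbb{Z}/2\mathbb{Z})^{*7}$. With respect to its seven involutive generators this Cayley graph is exactly a $7$-regular tree, so the task reduces to finding seven involutions in $G$ that freely generate a free product of $\mathbb{Z}/2\mathbb{Z}$'s, in a way that (suitably subdivided) embeds isometrically into $\mathrm{Cay}(G)$ and carries the Morse property.

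I would choose seven sufficiently long and generic reduced words $u_1, \ldots, u_7 \in F_4$, all of the same length $L$, so that the symmetrised set $\{u_1^{\pm 2}, \ldots, u_7^{\pm 2}\}$ satisfies the small cancellation hypothesis at level $0$ of the graded condition of \cite{osin_lacunary}, and include these as relators in the construction of $G$. A Greendlinger-type lemma for graded small cancellation then yields three things at once: each $u_i$ has order exactly $2$ in $G$; any alternating product $u_{i_1}u_{i_2}\cdots u_{i_n}$ with $i_j \neq i_{j+1}$ is non-trivial in $G$; and its $F_4$-representative of length $nL$ is geodesic in $\mathrm{Cay}(G)$. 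The key point is that the reduced $F_4$-form of such an alternating product contains no large piece of any $u_j^2$ (that would require a run of $u_j$'s, contradicting $i_j \neq i_{j+1}$), while longer relators at later grades of the construction are far too long to affect a word of length $nL$ once the lacunae are chosen appropriately. This simultaneously gives $H \cong (\mathbb{Z}/2\mathbb{Z})^{*7}$ and the desired isometric embedding of $T$ (with its edges subdivided into $L$ unit segments) into $\mathrm{Cay}(G)$.

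For the Morse property, take a bi-infinite simple path $\gamma$ in $T$ and a $(\lambda,c)$-quasi-geodesic $\alpha$ in $\mathrm{Cay}(G)$ with endpoints on $\gamma$. I would argue that any large deviation of $\alpha$ from $\gamma$ forces an application of some defining relator that shortcuts a correspondingly long portion of $\gamma$, which the small cancellation bounds forbid; the total deviation then bounds in terms of $\lambda, c$ only. The principal obstacle is precisely here: since $G$ is only \emph{lacunary} hyperbolic, the Morse lemma is not directly available in $G$. I would therefore run the quasi-geodesic argument inside a suitable intermediate hyperbolic quotient $G_i$ whose hyperbolicity constant is small relative to the scales at which subsequent relators are introduced, apply the Morse lemma there, and then check that the short $u_i^2$-relators and the much longer relators of later grades of the construction interact compatibly enough that both the tree structure of $T$ and the Morse bound for $\gamma$ survive to the limit $G = \lim G_i$. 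Making these constants uniform over all bi-infinite simple paths in $T$, rather than proving the bound only for individual lines, is the point at which the graded nature of the small cancellation condition has to be fully exploited.
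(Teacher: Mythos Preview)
Your approach has a fundamental obstruction precisely in the case the theorem is aimed at. The statement is for an \emph{arbitrary} graded small cancellation group with $G_0=F_4$, in particular for the unbounded torsion groups of \cite{osin_lacunary} that motivate the whole paper. But $(\mathbb{Z}/2\mathbb{Z})^{*7}$ contains elements of infinite order (for instance $u_1u_2$), so it cannot sit inside a torsion group as a subgroup. Concretely, in such a $G$ some power $(u_1u_2)^N$ is trivial, and your claim that every alternating product $u_{i_1}\cdots u_{i_n}$ with $i_j\neq i_{j+1}$ is non-trivial and geodesic in $G$ must fail for large $n$: the putative tree closes up into loops rather than embedding isometrically. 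The paper in fact remarks explicitly that when $G$ is torsion the branching pattern of $T$ inside $Cay(F_4)$ is necessarily non-periodic, which already rules out any realisation of $T$ as the Cayley graph of a subgroup. A secondary issue is that you propose to ``include $u_i^2$ as relators in the construction of $G$'', but $G$ is given, not to be constructed; the presentation is not yours to adjust.

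The paper's route is quite different. The $7$-regular tree is a subtree of $Cay(F_4)$ itself, obtained by deleting one outgoing direction at each vertex according to a non-periodic rule: a counting argument (Lemmas~\ref{lemma_sc_length}--\ref{lemma_comb_free}) shows that the reduced words in $F_4$ which avoid every subword of length $\lfloor\sqrt{\mu_n}\sigma_n\rfloor$ of any relator still form a full $7$-regular tree. This tree is then shown, inductively along the quotients $G_i\twoheadrightarrow G_{i+1}$, to remain convex with unique geodesics and with very thin attached geodesic $n$-gons (Lemma~\ref{lemma_KstaysQuasiConvex_free}, Proposition~\ref{prop_L}). The Morse property is not obtained by a direct quasi-geodesic argument in some $G_i$ --- your worry about the non-uniform hyperbolicity constants $\delta_i$ is exactly the obstruction --- but via the asymptotic-cone characterisation of Drutu--Mozes--Sapir (Proposition~\ref{prop_transversal}): one checks that the ultralimit of the tree sits in a transversal tree of every cone, because limits of triangles with base on it degenerate.
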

 The graded small cancellation condition on these groups gives the necessary technical tools for understanding the limit group. A result by C.\;Drutu and M.\;Sapir (\cite{drutu_sapir}) states that a group satisfying a non-trivial group law cannot have global cut-points in the asymptotic cone. Another result by the same authors together with S.\;Mozes (\cite{drutu_mozes_sapir}) connects cut-points in asymptotic cones with Morse geodesics. This implies that no group with bounded torsion can have Morse geodesics. Among other unbounded torsion groups, it is known that the Grigorchuk group (introduced in \cite{grigor_1}) cannot contain Morse geodesics because it is commensurable with its direct product (\cite{deLaHarpe}) and hence has no cut-points in any asymptotic cones.

\medskip

Our main construction builds upon the graded small cancellation construction used in \cite{osin_lacunary} to obtain  groups where at least one asymptotic cone is an $\mathbb{R}$-tree. Such groups are called \emph{lacunary hyperbolic groups}. It is shown in that paper, that among these groups, there are some which are  unbounded torsion groups. The idea is to take a direct limit of hyperbolic groups, each a quotient of the previous one by adding some relations, where the relations added satisfy a small cancellation criterion. This idea of taking such limits has been source of many interested examples of groups. Most notably Tarski-monsters  as introduced by A.\;Ol'shanskii in \cite{olshanskii_residualing}, which are group where any two elements generate the entire group. Over relatively hyperbolic groups, such an approach lead to infinite groups with just $2$ conjugacy classes as constructed by D.\;Osin (\cite{osin_smallCancellation}). Such techniques have been generalized by M.\;Hull to acylindrically hyperbolic groups (\cite{hull_acylin}). 

\medskip 

In this paper we treat groups with graded small cancellation which are a direct limit of hyperbolic groups. We show that there exists a convex subset which is a tree in the initial group that gets preserved in every step when taking a quotient  (Lemma~\ref{lemma_KstaysQuasiConvex_free}). We then conclude that every bi-infinite path on this tree is a Morse geodesic in the infinitely presented limit group (Theorem~\ref{thm:morse}). 

\medskip

While we are convinced that this construction works for any such sequence of hyperbolic groups, we will only show it in the case when the initial group is a free group with four generators. This reduction spares a lot of technical obstacles, which can be overcome rather easily but lengthen the proofs enormously. Due to the high level of technicality we have decided that the ``simple'' case where the first group in the sequence is $F_4$ is the best way to present our construction. The main idea for any general sequence of hyperbolic groups remains the same by exhibiting a free, quasi-convex subgroup within the initial group of the sequence. However this technicality turns a lot of segments which are geodesics in our proofs into quasi-geodesics, hence requiring much longer, more technical proofs.

\medskip

The key to our investigation is a result by C. Drutu and M. Sapir which relates Morse geodesics to asymptotic cones (\cite{drutu_mozes_sapir}). In \cite{osin_lacunary}, the authors show that relations in such graded small cancellation groups give rise to circles in asymptotic cones. Based on this, we aim to find paths in the Cayley graph which do not remain close to a circle coming from a relation for ``too long'' compared to the length of that relation. Then in the limit such a path will only touch a circle in the asymptotic cone, i.e.\;only have at most one point in common with a circle.

\medskip

The way to exhibit such paths is a combinatorial one. We first study the combinatorics of the set of relations in such graded small cancellation groups as words in $F_4$. 

\begin{lemma}
Let $\left<G | \mathcal{Q} \right>$ be a graded small cancellation presentation with sequences $\mu=\left(\mu_n\right), \rho=\left(\rho_n\right), \sigma = \left( \sigma_n \right)$ and $G_0=F_4$. Then the set $\mathcal{Q}$ contains at most $8^{\mu_n\sigma_n}$ relations of length $l$ for any length $\rho_n < l < \sigma_n$.
\end{lemma}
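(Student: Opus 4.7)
The plan is to inject the set of length-$l$ relations in $\mathcal{Q}$ into reduced words of a bounded length in $F_4$, and then simply count the target set. The crucial input is the graded small cancellation condition at layer $n$: any piece, i.e.\ a common subword of two distinct elements of the symmetrised set $\mathcal{Q}^*$, between relations of length $l$ has length less than $\mu_n l$. In particular, two distinct relations $R_1, R_2 \in \mathcal{Q}$ of length $l$ cannot share their initial segment of length $\lceil \mu_n l \rceil$: that common prefix would appear as a piece of length at least $\mu_n l$, contradicting the condition. Thus the map sending a length-$l$ relation to its initial segment of length $\lceil \mu_n l \rceil$ is injective.

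Second, I would count the target of this map. A reduced word of length $k$ in $F_4$ (whose alphabet has $8$ letters including inverses) contributes $8 \cdot 7^{k-1}$ possibilities, hence at most $8^k$. Applying this with $k = \lceil \mu_n l \rceil$ and using the upper bound $l < \sigma_n$ to replace $k$ by $\mu_n \sigma_n$ gives
\[
\#\{R \in \mathcal{Q} : |R| = l\} \;\leq\; 8^{\lceil \mu_n l \rceil} \;\leq\; 8^{\mu_n \sigma_n},
\]
which is the stated bound. The lower bound $\rho_n < l$ is not needed for counting, but reflects the fact that relations of length below $\rho_n$ belong to an earlier layer, for which $\mu_n$ would be a different constant.

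The hard part is not the counting but the precise translation of the small cancellation condition into the statement that a shared initial segment of length $\lceil \mu_n l \rceil$ is forbidden. Care is needed that the shared subword really is a piece in the sense of $\mathcal{Q}^*$ — which it is, since $\mathcal{Q}^* \supseteq \mathcal{Q}$ and the common prefix is a common subword of two distinct elements there — and to track rounding so that $\lceil \mu_n l \rceil \leq \mu_n \sigma_n$ follows from $l < \sigma_n$ under the ambient integrality conventions on $\mu_n \sigma_n$. Once those bookkeeping matters are settled, the lemma is a direct consequence of the small cancellation hypothesis combined with the cardinality of $F_4$.
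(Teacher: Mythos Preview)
Your proposal is correct and follows essentially the same approach as the paper: inject the set of length-$l$ relations into the set of short words in $F_4$ via a subword of length roughly $\mu_n\sigma_n$, use the $C(\epsilon_n,\mu_n,\rho_n)$ condition to see that distinct relations cannot share such a subword, and then bound the target by $8^{\mu_n\sigma_n}$. The only cosmetic differences are that the paper picks an arbitrary subword of length $\lfloor \mu_n\sigma_n\rfloor$ rather than the initial prefix of length $\lceil \mu_n l\rceil$, and both versions sweep the same rounding issues under the rug.
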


Once we have concluded that these relations are in fact not very ``dense'' in the free group $F_4$, we use an inductive counting argument to conclude that there are infinitely many words which have very little in common with such relations. In particular, it emerges from the proof of Lemma \ref{lemma_comb_free}, that such paths form a subtree of the Cayley graph of $F_4$. Even more, it emerges that it suffices to ``cut off'' one direction from every vertex in $Cay(F_4)$ and the remaining tree has the property that each path is a Morse geodesic in the graded small cancellation group. We would like to note, that if $G$ is an unbounded torsion group the pattern according to which we decide which direction in $Cay(F_4)$ will not be part of this tree must necessarily be non-periodic.

\medskip

\textbf{Acknowledgements:} This result has emerged from many inspiring discussions with Romain Tessera. It was only in the end that we decided not to write a joint paper. I have been invited to give a talk on this result in several places and would like to thank everyone for their interest and hospitality and for all the helpful comments that I have received. The ground work on this project has been done during my time as a postdoc of Anna Erschler at the ENS Paris where I was supported by the ERC starting grant 257110 ``RaWG''. I'm deeply grateful to both my current supervisors Vadim Kaimanovich and Kirill Zainoulline for allowing me to finish this project while supported by their respective grants. 

\section{Basic definitions}

In this section we define the necessary terms and notation. Familiarity with metric spaces, quasi-isometries, hyperbolicity and Cayley-graphs of groups is assumed. Further, we will reference the necessary ground work from \cite{osin_lacunary} that we build our construction on.

\medskip

\begin{definition}
A bi-infinite (quasi-)geodesic $g$ in a metric space is called a \emph{Morse} (quasi-)geodesic if for any $L>1, A\geq0$ there
exists $C=C(L,A)$ such that for any two points $x,y$ on $g$ and any $(L,A)$-quasi-geodesic $\gamma$ between $x$ and $y$, we
have that $\gamma$ stays $C$-close to $g$. If $g$ is the embedding of an infinite cyclic group $\left<x\right>$ into the Cayley
graph of $G$, then $x$ is called a \emph{Morse element}.
\end{definition}

\subsection{Asymptotic cones}

Asymptotic cones are limit spaces of Cayley graphs of groups. They were first introduced by M.\;Gromov (\cite{gromov_nil_poly}) and a description via ultrafilters was then given by L.\;van~den~Dries and A.\;Wilkie (\cite{vdDries_Wilkie}).

\medskip

A \emph{non-principal ultrafilter} $\omega$ is a finitely additive measure defined on all subsets $S$ of $\mathbb{N}$, such
that $\omega(S) \in \left\{0,1\right\}$, $\omega(\mathbb{N})=1$ and $\omega(S)=0$ if $S$ is a finite subset. Given two infinite
sequences of real numbers $(a_n), (b_n)$, we write $a_n = o_\omega(b_n)$ if $\lim_\omega a_n/b_n = 0$. On the set of all infinite
sequences of real numbers we define an equivalence relation. We write $a_n = o_\omega(b_n)$ if $\lim_{\omega} a_n/b_n = 0$.
Similarly, we say $a_n=\Theta_{\omega}(b_n)$ if $0<\lim_{\omega}a_n/b_n < \infty$. Further we say $a_n = O_\omega(b_n)$ if
$\lim_\omega a_n/b_n < \infty$.

\medskip

Let $(X_n, dist_n), n \in
\mathbb{N}$ be a sequence of metric spaces. Fix an arbitrary sequence $e=(e_n)$ of points $e_n \in X_n$.
The $\omega$-limit $\lim_\omega (X_n)_e$ is the quotient space of equivalence classes where the distance between $(f_n)^\omega$
and $(g_n)^\omega$ is defined as $\lim_\omega dist(f_n,g_n)$. An \emph{asymptotic cone} $Con^{\omega} (X,e,d)$ of a metric
space $(X, dist)$ where $e=(e_n)$, $e_n \in X$, and $d=(d_n)$ is
an unbounded non-decreasing scaling sequence of positive real numbers, is the $\omega$-limit of spaces $X_n=(X,dist/d_n)$.

\medskip

\begin{definition}
Let $F$ be a complete geodesic metric space and let $\mathcal{P}$ be a collection of closed geodesic non-empty subsets (called
\emph{pieces}). Suppose that the following two properties are satisfied:

\begin{enumerate}
\item Every two different pieces have at most one common point.
\item Every non-trivial simple geodesic triangle (a simple loop composed of three geodesics) in $F$ is contained in one piece.
\end{enumerate}
Then we say that the space $F$ is \emph{tree-graded with respect to $\mathcal{P}$}. 

\medskip

The topological arcs starting in a given point and intersecting each piece in at most one point compose a real tree called
\emph{transversal tree}. 
\end{definition}

\medskip

The following proposition from \cite{drutu_mozes_sapir} describes Morse geodesics in terms of asymptotic cones which are tree-graded structures. We give two out
of five equivalences here.

\begin{prop}[\cite{drutu_mozes_sapir}]\label{prop_transversal}
Let $X$ be a metric space and let $q$ be a bi-infinite quasi-geodesic in $X$ and for every two points $x,y$ on $q$.
The following conditions are equivalent for $q$:
\begin{enumerate}
\item In every asymptotic cone of $X$, the ultralimit of $q$ is either empty or contained in a transversal tree for some
tree-graded structure;
\item $q$ is a Morse quasi-geodesic;
\end{enumerate}
\end{prop}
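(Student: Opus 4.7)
The plan is to prove both implications via the connection between cut-points of the asymptotic cone and tree-graded structures, in the spirit of \cite{drutu_mozes_sapir}. In each direction one reduces the statement to a property of cut-points in the cone and then appeals to the general machinery producing a tree-grading from a collection of cut-points.

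For $(2) \Rightarrow (1)$, assuming $q$ is Morse, the key step is to show that every point of the non-empty ultralimit $q^\omega$ is a cut-point of the ambient asymptotic cone. The Morse hypothesis implies that the ultralimit of any sequence of $(L,A)$-quasi-geodesics in $X$ whose endpoints converge to $x^\omega, y^\omega \in q^\omega$ coincides with the sub-arc of $q^\omega$ from $x^\omega$ to $y^\omega$, because such quasi-geodesics are forced to stay in a uniform neighbourhood of $q$. Consequently, if some $p \in q^\omega$ admitted a continuous arc in the cone between points of $q^\omega$ on opposite sides of $p$ yet avoiding $p$, one could approximate this arc by piecewise-geodesic quasi-geodesics and obtain a sequence of quasi-geodesics in $X$ staying bounded away from the corresponding $p_n \in q$, contradicting Morse. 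Once $q^\omega$ is shown to consist of cut-points, the standard construction producing a tree-graded structure from a geodesic space with cut-points places $q^\omega$ inside a transversal tree.

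For $(1) \Rightarrow (2)$, I would argue by contrapositive. If $q$ is not Morse, there exist $L > 1$, $A \geq 0$ and $(L,A)$-quasi-geodesics $\gamma_n$ between points $x_n, y_n$ on $q$ whose maximal distance $d_n$ from $q$ tends to infinity. Taking the scaling sequence comparable to $d_n$ and adjusting base-points, the ultralimit $\gamma^\omega$ is a geodesic from $x^\omega$ to $y^\omega$ in the resulting asymptotic cone which genuinely leaves $q^\omega$. This produces two distinct arcs from $x^\omega$ to $y^\omega$ in the cone, namely the sub-arc of $q^\omega$ and $\gamma^\omega$, contradicting the defining property of a transversal tree, whose topological arcs between two fixed points are unique and meet each piece in at most one point.

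The main technical obstacle is the approximation step in the first direction. Turning an arbitrary continuous arc in the cone that avoids a point $p$ into a sequence of genuine quasi-geodesics in $X$ with uniform constants $L, A$ requires choosing subdivision points that are mutually close enough for the broken geodesic to form a controlled quasi-geodesic, while the avoidance of $p$ at each level survives in the ultralimit. This is the technical heart of the argument; the abstract passage from cut-points to a tree-graded structure, and the contrapositive in the second direction, are comparatively routine once the correct setup is in place.
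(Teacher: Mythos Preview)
The paper does not prove this proposition at all: it is quoted from \cite{drutu_mozes_sapir} and stated without proof, so there is no ``paper's own proof'' to compare your attempt against. Your sketch is a reasonable outline of the Drutu--Mozes--Sapir argument, but since the paper treats this as a black-box citation, any proof you supply goes strictly beyond what the paper does.
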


In particular, it emerges from the proof of the above proposition that the limit of a Morse quasi-geodesic $q$ in every asymptotic
cone is such that 
\begin{enumerate}
\item every point of $q$ is a cut-point,
\item every point of $q$ separates $q$ into two halves which lie in two different connected components.
\end{enumerate}

\subsection{Graded small cancellation}

The following definitions are a summary of the basic definitions in \cite{osin_lacunary}, some of which can originally be found in \cite{olshanskii_residualing}. The main idea here is that if we have a 'nice' group $H$ and we take an appropriate quotient $H_1$ by imposing strong conditions on the relations that we add, then $H_1$ will again have 'nice' properties. Now take a sequence of groups $\{H_i\}$ where in each $H_i$ the relations satisfy certain conditions. By introducing asymptotic conditions on the conditions in each group $H_i$, we can assure that the limit of the $H_i$ will again be an infinite group. Further, these conditions allow the relations to be chosen in such a way that we will obtain infinite groups with desired properties. Among such infinite groups resulting from this are infinite unbounded torsion groups. 

\medskip

To study small cancellation conditions, we first introduce an $\epsilon$-piece. The main difference to classical small cancellation is that it is enough for two relations to be 'relatively close to each other' to be said to have a common piece. The pieces do not necessarily need to be a joint segment of the relations, just two segments which are close to each other. 

\begin{definition}
Let $H$ be a group generated by a set $S$. Let $\mathcal{R}$ be a symmetrized set of reduced words in $S^{\pm 1}$. For $\epsilon
>0$, a subword $U$ of a word $R \in \mathcal{R}$ is called an \emph{$\epsilon$-piece} if there exists a word $R' \in \mathcal{R}$
such that:

\begin{enumerate}
\item $R \equiv UV, R' \equiv U'V'$, for some $V, U', V'$;
\item $U' \equiv YUZ$ in $H$ for some words $Y,Z$ such that $\max\{|Y|, |Z|\} \leq \epsilon$;
\item $YRY^{-1} \neq R'$ in the group $H$.
\end{enumerate}
\end{definition}

Recall that a word $W$ in the alphabet $S^{\pm 1}$ is called $(\lambda, c)$-quasi-geodesic (respectively geodesic) in $H$ if any
path in $\Gamma(H,S)$ labeled by $W$ is $(\lambda, c)$-quasi-geodesic (respectively geodesic).

\begin{definition}
Let $\epsilon \geq 0, \mu \in (0,1)$ and $\rho>0$. We say that a symmetrized set $\mathcal{R}$ of words over the alphabet $S^{\pm
1}$ satisfies the \emph{condition $C(\epsilon, \mu, \rho)$} for the group $H$, if
\begin{enumerate}
\item all words from $\mathcal{R}$ are geodesic in $H$;
\item $|R| \geq \rho$ for any $R \in \mathcal{R}$;
\item the length of any $\epsilon$-piece contained in any word $R \in \mathcal{R}$ is smaller than $\mu \cdot |R|$. 
\end{enumerate}
\end{definition}

We use the above definitions to define graded small cancellation groups.

\begin{definition}\label{def_graded_sc_gps}
Let $\alpha, K$ be positive constants. We say that the presentation
\begin{equation}\label{eq_gsc_pres}\left<S \mid \mathcal{R}\right> = \left<S \mid \bigcup_{i=0}^{\infty}
\mathcal{R}_i\right>\end{equation} of a group $G$ is a $\mathcal{Q}(\alpha, K)$-presentation if the following conditions hold for
some sequences $\epsilon=(\epsilon_n), \mu=(\mu_n)$ and $\rho=(\rho_n)$ of positive real numbers ($n=1,2 \dots$), where $\epsilon_n \geq 1$ for all $n$. 

\begin{enumerate}
\item[$(Q_0)$]\label{part_gsc_q_0} The group $G_0= \left<S \mid \mathcal{R}_0\right>$ is $\delta_0$-hyperbolic for some $\delta_0$.
\item[$(Q_1)$]\label{part_gsc_q_1} For every $n \geq 1$, $\mathcal{R}_n$ satisfies a $C(\epsilon_n, \mu_n, \rho_n)$ condition over
$G_{n-1}=\left<S \mid
\bigcup_{i=0}^{n-1} \mathcal{R}_i\right>$.
\item[$(Q_2)$]\label{part_gsc_q_2} $\mu_n=o(1), \mu_n \leq \alpha$ and $\mu_n\rho_n > K \epsilon_n $ for any $n \geq 1$.
\item[$(Q_3)$]\label{part_gsc_q_3} $\epsilon_{n+1} > 8 \cdot \max\{|R|, R \in \mathcal{R}_n\} = O(\rho_n)$.
\end{enumerate}
\end{definition}

The following lemma shows that if $\alpha$ is small enough and $K$ is big enough, groups with
$\mathcal{Q}(\alpha,K)$-presentations have nice properties.

\begin{lemma}\label{lem:graded}
Let \eqref{eq_gsc_pres} be a $\mathcal{Q}(0.01, 10^6)$-presentation. Then the following conditions hold:
\begin{enumerate}[(a)]
 \item For every $n \geq 1$, $G_n = \langle H \mid \mathcal{R}_n \rangle$ is $\delta_n$-hyperbolic with $\delta_n \leq 4 \cdot \max \{|R| \mid R \in \mathcal{R}\}$.
 \item $\epsilon_n = o(\rho_n)$
 \item $\rho_n = o(\rho_{n+1})$, in particular $\rho_n \rightarrow \infty$ as $n \rightarrow \infty$ and $\rho_n = o(\rho_{n+1})$.
 \item $\rho_n = o(r_S(G_n\rightarrow G_{n+1})$, where $r_S$ is the injectivity radius.
\end{enumerate}
\end{lemma}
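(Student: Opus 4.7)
The plan is to observe that parts (b) and (c) are bookkeeping consequences of the asymptotic conditions in Definition \ref{def_graded_sc_gps}, while (a) and (d) rest on the classical small-cancellation machinery over hyperbolic groups imported by Osin in \cite{osin_lacunary}. I would handle (b) and (c) first since they are quick, then set up (a) inductively and deduce (d) from a Greendlinger-type lemma applied to the inductive step.

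For (b), condition $(Q_2)$ gives $\mu_n \rho_n > 10^6 \epsilon_n$, so $\epsilon_n/\rho_n < \mu_n/10^6$, and since $\mu_n = o(1)$ this goes to $0$. For (c), I would combine $(Q_3)$ with $(Q_2)$ shifted to index $n{+}1$: since $\epsilon_{n+1} > 8\max\{|R| : R \in \mathcal{R}_n\} \geq 8\rho_n$ and $10^6 \epsilon_{n+1} < \mu_{n+1}\rho_{n+1}$, we obtain
\[
\frac{\rho_n}{\rho_{n+1}} < \frac{\mu_{n+1}}{8 \cdot 10^6} \longrightarrow 0,
\]
which also gives $\rho_n \to \infty$.

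For (a), I would proceed by induction on $n$, with the base case supplied by $(Q_0)$. Assuming $G_{n-1}$ is $\delta_{n-1}$-hyperbolic, condition $(Q_1)$ makes $\mathcal{R}_n$ a $C(\epsilon_n, \mu_n, \rho_n)$-family over $G_{n-1}$ with $\mu_n \leq 0.01$ and $\mu_n\rho_n$ vastly larger than both $\epsilon_n$ and $\delta_{n-1}$ (thanks to $(Q_3)$ giving $\epsilon_n \gg \delta_{n-1}$ at the previous stage). The small-cancellation-over-hyperbolic-groups theorem of Ol'shanskii (quoted as a standard tool in \cite{osin_lacunary}) then upgrades hyperbolicity to $G_n$, and its quantitative form gives the bound $\delta_n \leq 4\max\{|R| : R \in \mathcal{R}_n\}$ because any geodesic triangle in $G_n$ either lives essentially in $G_{n-1}$ (and inherits thin triangles from $\delta_{n-1}$, which is negligible by $(Q_3)$) or is forced by Greendlinger to expose an almost-full relator of $\mathcal{R}_n$.

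Finally, for (d), the same Greendlinger-type statement in the inductive step says that any word in $G_n$ that becomes trivial in $G_{n+1}$ must contain a subword matching, up to $\epsilon_{n+1}$-closeness, at least $(1-\mu_{n+1})|R|$ of some $R \in \mathcal{R}_{n+1}$; in particular its length is at least $(1-\mu_{n+1})\rho_{n+1}$. Hence $r_S(G_n \to G_{n+1}) \geq (1 - \mu_{n+1})\rho_{n+1}$, and combining with part (c) yields $\rho_n = o(\rho_{n+1}) = o(r_S(G_n \to G_{n+1}))$. The real obstacle throughout is (a): the other three parts reduce to arithmetic, but (a) genuinely requires the hyperbolic small-cancellation theorem, and one must check that the explicit constants $\alpha = 0.01$ and $K = 10^6$ are small/large enough for Ol'shanskii's machinery to trigger at every stage of the induction.
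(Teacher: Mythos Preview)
The paper does not supply its own proof of this lemma: it is stated without proof in the background section as a summary of results from \cite{osin_lacunary}. Your sketch is correct and is precisely the standard argument; in fact, part~(a) is exactly Lemma~\ref{lemma_46Lacunary}(2) applied inductively (with $(Q_3)$ guaranteeing $\epsilon_n > 2\delta_{n-1}$ at each step), and parts~(b)--(d) are the bookkeeping you describe, with~(d) coming from Lemma~\ref{lemma_46Lacunary}(1) in the degenerate case $t=1$ of a single boundary arc (the Greendlinger-type estimate you invoke).
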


\begin{definition} From now on the condition $\mathcal{Q}=\mathcal{Q}(0.01,10^6)$ will be called the
\emph{graded small cancellation condition} and a group satisfying it a \emph{graded small cancellation group}.\end{definition}

In addition to the above sequences, we define for a graded small cancellation group $\sigma_n = \max\{|R| \mid R \in \mathcal{R}_n\}$. The next proposition summarizes some inequalities that follow from the above definitions that will be frequently used in the next section. 

\begin{prop}\label{prop_inequalities}
Let $G$ be a graded small cancellation group. Then the following inequalities hold for the sequences $\{\epsilon_i\}, \{\rho_i\}, \{\mu_i\}$ for all $i \in \mathbb{N}$: 
\begin{enumerate}[(i)]
 \item \label{inequ1} $10^{-8} > \epsilon_i/\rho_i$,
 \item \label{inequ2} $\rho_{i+1} > 10^8 \cdot 8 \cdot \max \{|R| \mid R \in \mathcal{R}_{i}\} = 10^8 \cdot 8 \cdot \sigma_i $
 \item \label{inequ3} $\mu_n \rho_n > K \epsilon_n$
 \item \label{inequ4} $K = 10^6, \mu_i \leq 0.01$
 \item \label{inequ5} $\lim_{n \rightarrow \infty} \mu_n = 0$. 
 \item $\mu_n\rho_n> K \epsilon_n> K$
 \item $\sigma_n > K/\mu_n$
\end{enumerate}

\end{prop}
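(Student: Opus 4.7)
The plan is to verify each of (i)--(vii) by a direct chase of the defining conditions of a $\mathcal{Q}(0.01,10^6)$-presentation (Definition~\ref{def_graded_sc_gps}) together with the length bound coming from the $C(\epsilon_n,\mu_n,\rho_n)$ condition; no appeal to Lemma~\ref{lem:graded} is actually needed. First I would dispatch (iii), (iv) and (v), since they are nothing more than restatements of the hypotheses of condition $(Q_2)$ once the parameters $\alpha=0.01$ and $K=10^6$ fixed in the definition of the graded small cancellation condition are substituted.

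Next I would derive (i) from (iii) and (iv): dividing $\mu_n\rho_n>K\epsilon_n$ by $K\rho_n$ gives $\epsilon_n/\rho_n<\mu_n/K$, and the bounds $\mu_n\leq 0.01$ and $K=10^6$ force $\epsilon_n/\rho_n<0.01/10^6=10^{-8}$. For (ii) I would apply (i) at the index $n+1$ to obtain $\rho_{n+1}>10^8\,\epsilon_{n+1}$, and then combine with $(Q_3)$, which gives $\epsilon_{n+1}>8\sigma_n$, to conclude $\rho_{n+1}>10^8\cdot 8\cdot\sigma_n$.

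For (vi) I would use that $\epsilon_n\geq 1$ is explicitly assumed in Definition~\ref{def_graded_sc_gps}, so (iii) upgrades to $\mu_n\rho_n>K\epsilon_n\geq K$. Finally, for (vii) I would observe that the $C(\epsilon_n,\mu_n,\rho_n)$ condition which is part of $(Q_1)$ forces $|R|\geq\rho_n$ for every $R\in\mathcal{R}_n$, so $\sigma_n\geq\rho_n$; dividing the inequality $\mu_n\rho_n>K$ from (vi) by $\mu_n$ then yields $\sigma_n\geq\rho_n>K/\mu_n$.

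There is no real mathematical obstacle here: the statement is a convenient bookkeeping lemma that packages the definition into forms that will be reused in the sequel. The only thing to be careful about is the order of dependencies --- in particular that (ii) uses (i) at index $n+1$ rather than at $n$, and that (vii) requires both the length bound from $(Q_1)$ and inequality (vi) --- so that each item is established before it is invoked.
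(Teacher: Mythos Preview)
Your proposal is correct and matches the paper's approach: the paper's own proof is the single sentence ``All of these follow immediately by combining the above conditions,'' and what you have written is precisely a careful unpacking of that sentence. Your ordering of the dependencies (in particular deriving (i) from (iii)--(iv), then (ii) from (i) at index $n+1$ together with $(Q_3)$, and (vii) from (vi) plus the length bound in $(Q_1)$) is exactly the intended chain.
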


\begin{proof}
All of these follow immediately by combining the above conditions.
\end{proof}

Figure \ref{fig:relations} visualizes the distribution of lengths of relations. Each set $\mathcal{R}_i$ contains relations of lengths between $\rho_i$ and $\sigma_i$. The proportion of each relation that can be an $\epsilon_i$-piece becomes smaller and smaller. 

\begin{figure}[h]
\begin{center}
\labellist
\pinlabel $\mathcal{R}_1$ at 30 0
\pinlabel $\mathcal{R}_2$ at 190 0
\pinlabel $\mathcal{R}_3$ at 390 0
\pinlabel $\rho_1$ at 30 25
\pinlabel $\sigma_1$ at 95 25
\pinlabel $\rho_2$ at 190 25
\pinlabel $\sigma_2$ at 285 25
\pinlabel $\rho_3$ at 390 25
\pinlabel $\sigma_3$ at 510 25
\pinlabel \tiny{$\mu_1 \sigma_1$} at 60 -5
\pinlabel \tiny{$\mu_2 \sigma_2$} at 240 -5
\pinlabel \tiny{$\mu_3 \sigma_3$} at 440 -5
\endlabellist
\includegraphics[scale=0.7]{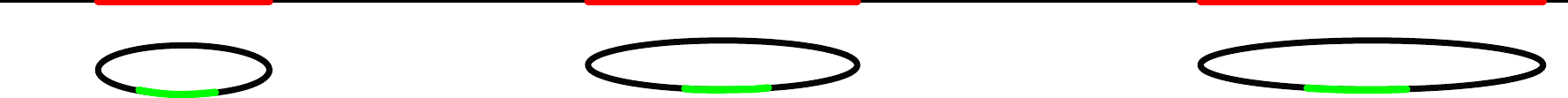} \caption{\small{Depicting the lengths of relations in the sets $\mathcal{R}_i$. We have $\rho_i > 10^8 \cdot \sigma_i$. The proportions of length less than $\mu_i\sigma_i$  of the relations which can be $\epsilon_i$-pieces become smaller and smaller. }\label{fig:relations}}
\end{center}
\end{figure}

\subsection{Lacunary Hyperbolic Groups}

\begin{definition}
A group is called \emph{lacunary hyperbolic} if one of its asymptotic cones is an $\mathbb{R}$-tree.
\end{definition}

These have been characterized in \cite{osin_lacunary} as limits of hyperbolic groups. Given a group homomorphism
$\alpha:G\rightarrow H$ and a generating set $S$ of $G$, we denote by $r_S(\alpha)$ the \emph{injectivity radius of $\alpha$ with
respect to $S$}, i.e. the radius of the largest ball $B$ in $G$ such that $\alpha$ is injective.

\begin{theorem}[\cite{osin_lacunary}]
Let $G$ be a finitely generated group. Then the following conditions are equivalent.

\begin{enumerate}
\item $G$ is lacunary hyperbolic.
\item There exists a scaling sequence $d=(d_n)$ such that $Con^\omega(G,d)$ is an $\mathbb{R}$-tree for any non-principal
ultrafilter $\omega$.
\item $G$ is the direct limit of a sequence of finitely generated groups and epimorphisms 
\begin{equation}\label{eq_lac} G_1 \xrightarrow{\alpha_1} G_2 \xrightarrow{\alpha_2} \dots \end{equation} such that $G_i$ is
generated by a finite set $S_i$, $\alpha_i(S_i)=S_{i+1}$, and each $G_i$ is $\delta_i$-hyperbolic, where
$\delta_i=o(r_{S_i}(\alpha_i))$.
\end{enumerate}
\end{theorem}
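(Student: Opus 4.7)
The proof naturally splits into the cycle (2) $\Rightarrow$ (1) $\Rightarrow$ (3) $\Rightarrow$ (2). The implication (2) $\Rightarrow$ (1) is immediate, since picking any non-principal ultrafilter against the given scaling sequence produces an $\mathbb{R}$-tree cone.

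For (3) $\Rightarrow$ (2), the plan is to take $d_n := r_n := r_{S_n}(\alpha_n)$ as the scaling sequence. The key observation is that the ball of radius $r_n$ in $Cay(G, S)$ maps isometrically into $Cay(G_n, S_n)$, and the latter is $\delta_n$-hyperbolic with $\delta_n = o(r_n)$. For any non-principal ultrafilter $\omega$, the cone $Con^\omega(G, d_n)$ coincides (inside each bounded ball) with the ultralimit of the rescaled Cayley graphs $(Cay(G_n, S_n), dist/d_n)$, whose hyperbolicity constant $\delta_n/d_n$ tends to $0$. Since an ultralimit of $\delta$-hyperbolic spaces with $\delta \to 0$ is $0$-hyperbolic, the resulting cone is an $\mathbb{R}$-tree for every $\omega$.

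For (1) $\Rightarrow$ (3), the harder direction, I would reverse-engineer the sequence from the tree cone. Suppose $Con^\omega(G, d_n)$ is an $\mathbb{R}$-tree for some fixed $\omega$ and scaling $(d_n)$. First pass to a subsequence (still called $d_n$) along which every geodesic triangle in $Cay(G,S)$ of diameter at most $d_n$ is $\delta_n$-thin with $\delta_n = o(d_n)$; if no such subsequence existed then, along a positive $\omega$-measure subset of indices, one could extract genuinely fat triangles whose ultralimit would be a non-degenerate triangle in the tree cone, a contradiction. Next, define $G_n$ as the quotient of $F(S)$ by the set of all relations of $G$ of length at most, say, $100\, d_n$, with $\alpha_n : G_n \twoheadrightarrow G_{n+1}$ the natural quotient. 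Each $G_n$ is finitely presented; by the Gromov--Cartan--Hadamard local-to-global hyperbolicity principle (``if every triangle up to diameter $K\delta$ is $\delta$-thin then the whole space is $O(\delta)$-hyperbolic''), $G_n$ is $O(\delta_n)$-hyperbolic. By construction the injectivity radius $r_{S_n}(\alpha_n)$ is at least of order $d_n$, so $\delta_n = o(r_{S_n}(\alpha_n))$, and the direct limit recovers $G$ because every relator of $G$ has some finite length and is therefore included in $G_n$ for all sufficiently large $n$.

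The main obstacle is carrying out (1) $\Rightarrow$ (3) rigorously: both the extraction of a scale at which local thinness holds with $\delta_n = o(d_n)$, and the subsequent use of a local-to-global principle to upgrade this to global hyperbolicity of $Cay(G_n, S_n)$, require a careful Rips-style argument. One has to tune the relation cut-off so that only short defects are killed, without introducing new torsion or collapsing larger balls, so that the injectivity radius bound $r_n \gtrsim d_n$ survives. Once this calibration is in place, the identification of the direct limit with $G$ and the verification of $\delta_n = o(r_n)$ reduce to routine bookkeeping.
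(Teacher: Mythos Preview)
The paper does not give a proof of this theorem at all: it is quoted verbatim from \cite{osin_lacunary} and used as background. So there is no ``paper's own proof'' to compare against here.

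That said, your outline is the standard cycle from \cite{osin_lacunary} and is essentially correct. The implication (2) $\Rightarrow$ (1) is trivial; for (3) $\Rightarrow$ (2) the scaling $d_n = r_{S_n}(\alpha_n)$ works because the ball of radius $r_n$ in $Cay(G,S)$ is isometric to the corresponding ball in $Cay(G_n,S_n)$, and $\delta_n/d_n \to 0$ forces the ultralimit to be $0$-hyperbolic; for (1) $\Rightarrow$ (3) one first extracts scales $d_n$ at which all triangles of size $\le C d_n$ are $o(d_n)$-thin (otherwise fat triangles survive in the tree cone), then defines $G_n$ by imposing all relations of $G$ of length $\le C' d_n$ and invokes a local-to-global hyperbolicity criterion. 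One small point to tighten in your write-up of (3) $\Rightarrow$ (2): the injectivity radius $r_{S_n}(\alpha_n)$ controls only the single map $G_n \to G_{n+1}$, so to get that balls in $G$ agree with balls in $G_n$ you should first pass to a subsequence along which the $r_n$ are increasing; this is harmless but should be said.
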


In \cite{osin_lacunary}, A.Yu.\;Olshanskii, D.\;Osin and M.\;Sapir have intensively studied such groups. In particular, the case
when all the groups in \eqref{eq_lac} are graded small cancellation groups. In this case, their asymptotic cones are rather well
understood.

\begin{definition}
We say that a metric space $X$ is a \emph{circle-tree}, if $X$ is tree graded with respect to a collection of circles whose radii
are uniformly bounded from below and above by positive constants.
\end{definition}

\begin{definition}
Given an ultrafilter $\omega$ and a scaling sequence $d=(d_n)$, we say that a sequence of real numbers $f=(f_n)$ is
$(\omega,d)$-visible if there exists a subsequence $(f_{n_i})$ of $f$ such that $f_{n_i}=\Theta_{\omega}(d_i)$.
\end{definition}

\begin{theorem}[\cite{osin_lacunary}]\label{thm_gsc_ac}
For any group $C$ having a graded small cancellation presentation, any ultrafilter $\omega$ and any sequence of constants
$d=(d_n)$, the asymptotic cone $Con^{\omega}(G,d)$ is a circle-tree. $Con^{\omega}(G,d)$ is an $\mathbb{R}$-tree if and only if
the sequence $(\rho_n)$ from Definition \ref{def_graded_sc_gps} is not $(\omega, d)$-visible.
\end{theorem}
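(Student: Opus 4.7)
The plan is to identify a natural family of pieces in the asymptotic cone — one for each relation that survives the scaling — and then verify the two axioms in the definition of tree-graded. Specifically, for each $n$ and each $R \in \mathcal{R}_n$, each basepointed choice of translate $g_n R g_n^{-1}$ whose word length scales like $d_i$ determines, in the $\omega$-limit, a loop whose length lies in the finite interval $[\lim_\omega \rho_n/d_i, \lim_\omega \sigma_n/d_i]$. Because $R$ is geodesic in $G_{n-1}$ (hence, by Lemma~\ref{lem:graded}(a), coarsely convex in all later quotients), this $\omega$-limit is in fact a round circle in the cone. I would define $\mathcal{P}$ to be the collection of all such circles and argue it satisfies the two tree-graded axioms.

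For the uniform bound on the radii: condition $(Q_3)$ and Proposition~\ref{prop_inequalities}(ii) force $\rho_{n+1} \gg \sigma_n$, so for any fixed $d_i$ at most one index $n$ can produce a relation of length comparable to $d_i$. Thus the radii of circles in $\mathcal{P}$ lie in a bounded interval, bounded away from $0$ and $\infty$, as required for the circle-tree definition. The first tree-graded axiom (two different pieces intersect in at most one point) I would prove by contradiction: if two circles arising from $R, R' \in \mathcal{R}_n$ shared an arc in the cone, this would pull back to a common $\epsilon_n$-piece of length $\geq \mu_n |R|$, violating the $C(\epsilon_n,\mu_n,\rho_n)$ condition; the cross-level case ($R \in \mathcal{R}_n$, $R' \in \mathcal{R}_m$ with $m>n$) is ruled out because $\epsilon_m > 8\sigma_n$, so $R$ would itself sit inside an $\epsilon_m$-piece of $R'$.

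The second axiom is the main obstacle. Given a non-trivial simple geodesic triangle $\Delta$ in the cone, I would approximate it by genuine geodesic triangles $\Delta_k$ in the $\delta_n$-hyperbolic groups $G_n$ after suitable scaling. Hyperbolicity of each $G_n$ (item (a) of Lemma~\ref{lem:graded}) ensures each $\Delta_k$ is $\delta_n$-thin, so the only way its image in the cone can fail to degenerate to a tripod is for a van~Kampen diagram filling $\Delta_k$ to contain a relation $R$ whose boundary runs close to all three sides of $\Delta_k$ in a macroscopic way. By Greendlinger-type arguments in the graded small cancellation setting — i.e.\ the fact that a reduced diagram has a face whose boundary has an arc of length $>(1-3\mu_n)|R|$ on $\partial\Delta_k$ — only one relation can contribute, and in the limit $\Delta$ must be contained in the single circle coming from that $R$. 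This is where I expect to spend the most effort, as it requires carefully tracking the asymptotic behavior of diagrams across the sequence of quotients.

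Finally, for the $\mathbb{R}$-tree characterization, observe that the circles of $\mathcal{P}$ have strictly positive radius precisely when some $\rho_n$ satisfies $\rho_n = \Theta_\omega(d_i)$ along a subsequence, which is exactly $(\omega,d)$-visibility of $(\rho_n)$. If $(\rho_n)$ is not $(\omega,d)$-visible, then for every $n$ either $\rho_n/d_i \to 0$ (so the circle collapses to a point) or $\rho_n/d_i \to \infty$ (so no loop of $R$ survives the scaling at all), and $\mathcal{P}$ is empty in the cone, forcing the circle-tree to be an $\mathbb{R}$-tree. Conversely, if $(\rho_n)$ is $(\omega,d)$-visible then at least one genuine circle appears, obstructing the tree property.
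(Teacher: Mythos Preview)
This theorem is not proved in the paper at all: it is quoted verbatim from \cite{osin_lacunary} (Ol'shanskii--Osin--Sapir) and stated without proof, as background for the later construction. There is therefore no ``paper's own proof'' to compare your proposal against.

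That said, your outline is broadly in the spirit of the argument in \cite{osin_lacunary}: the pieces are indeed the $\omega$-limits of loops labelled by (translates of) the relators $R\in\mathcal{R}_n$ that survive the scaling, the small cancellation condition is what forces distinct pieces to meet in at most a point, and Lemma~\ref{lemma_46Lacunary} (the contiguity-diagram lemma) is the engine behind showing that any simple geodesic triangle in the cone sits in a single piece. Two places where your sketch is thin: first, the claim that the $\omega$-limit of a relator loop is a \emph{round circle} requires more than ``$R$ is geodesic in $G_{n-1}$'' --- you need Lemma~\ref{lemma_44Lacunary} to control how much of a relator can lie on a geodesic in the later quotients, so that subarcs of length $<|R|/2$ remain geodesic in the limit. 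Second, the uniform upper bound on radii needs $\sigma_n = O(\rho_n)$, which is part of condition $(Q_3)$ but which you do not invoke; the observation that ``at most one index $n$'' contributes for each $d_i$ gives separation of scales but not boundedness of $\sigma_n/\rho_n$. The triangle axiom is, as you anticipate, the real work, and in \cite{osin_lacunary} it is done via a careful analysis of minimal diagrams rather than a single Greendlinger face, but your instinct is correct.
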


The next lemma states that geodesic segments in the Cayley graph of a group $G$, which represent relations in a quotient $H$ of $G$, remain geodesic in the Cayley graph of $H$ on almost half their length.

\begin{lemma}[\cite{osin_lacunary}]\label{lemma_44Lacunary}
Suppose that the group $H$ is hyperbolic. Let $\mathcal{R}$ be a set of geodesic words in $H$, $\Delta$ a diagram over a
presentation $$H_1 = \langle H \mid \mathcal{R}\rangle = \langle S \mid \mathcal{O} \cup \mathcal{R}\rangle,$$ and $q$ a subpath of
$\partial \Delta$ whose label is geodesic in $H_1$. Then for any $\epsilon \geq 0$, no $\mathcal{R}$-cell $\Pi$ in $\Delta$ can
have an $\epsilon$-contiguity subdiagram $\Gamma$ to $q$ such that $(\Pi, \Gamma, q) > 1/2 + 2\epsilon/|\partial \Pi|$.
\end{lemma}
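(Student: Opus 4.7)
\emph{Proof strategy.} The plan is to argue by contradiction: suppose $\Pi$ admits an $\epsilon$-contiguity subdiagram $\Gamma$ to $q$ with $(\Pi,\Gamma,q) > 1/2 + 2\epsilon/|\partial \Pi|$. Write $\partial \Gamma = s_1 p_1 s_2 p_2^{-1}$ in the standard way: $p_1$ is the contiguity arc on $\partial \Pi$, $p_2$ the contiguity arc on $q$, and the side arcs $s_1, s_2$ have length at most $\epsilon$. Let $\bar p_1$ denote the complementary arc of $\partial \Pi$, so that the boundary of $\Pi$ factors as $p_1 \bar p_1$. The hypothesis translates to $|p_1| > |\partial \Pi|/2 + 2\epsilon$ and $|\bar p_1| < |\partial \Pi|/2 - 2\epsilon$, and the goal is to show that under these length constraints $q$ cannot after all be geodesic in $H_1$.

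The first step uses that, by convention, $\Gamma$ contains no $\mathcal{R}$-cells, so $\partial \Gamma$ is trivial already in $H$. Reading it off gives $p_2 = s_1 p_1 s_2$ in $H$, and since the relator $R \in \mathcal{R}$ containing $p_1$ is geodesic in $H$ we have $|p_1|_H = |p_1|$. The $H$-triangle inequality then yields
\[
|p_1| = |p_1|_H \leq 2\epsilon + |p_2|_H.
\]
The second step uses the $\mathcal{R}$-cell $\Pi$ itself: its boundary relation gives $p_1 = \bar p_1^{-1}$ in $H_1$, so combining with the equality $p_2 = s_1 p_1 s_2$ inherited in $H_1$ produces
\[
|p_2|_{H_1} \leq 2\epsilon + |\bar p_1|.
\]

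To link these two bounds, one invokes the hypothesis that $q$ is geodesic in $H_1$: the chain $|p_2|_{H_1} \leq |p_2|_H \leq |p_2|$ collapses to equality, since $p_2$ already realises the $H_1$-distance between its endpoints. In particular $|p_2|_H = |p_2|_{H_1}$, and chaining the two inequalities gives
\[
|p_1| \;\leq\; 2\epsilon + |p_2|_H \;=\; 2\epsilon + |p_2|_{H_1} \;\leq\; 4\epsilon + |\bar p_1|.
\]
Combining this with $|\bar p_1| < |\partial \Pi|/2 - 2\epsilon$ gives $|p_1| < |\partial \Pi|/2 + 2\epsilon$, the desired contradiction.

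The heart of the argument is the interplay between the two word metrics on the common Cayley graph: the relator $R$ is geodesic in $H$ while $q$ is geodesic in $H_1$, and the collapse of $\partial \Pi$ under the quotient $H \twoheadrightarrow H_1$ is captured precisely by the threshold $1/2 + 2\epsilon/|\partial \Pi|$. The only real subtlety — not a serious obstacle — is confirming that ``$\epsilon$-contiguity subdiagram'' is meant in the Olshanskii convention where $\Gamma$ contains no $\mathcal{R}$-cells; this is what makes the first identity $p_2 = s_1 p_1 s_2$ hold already in $H$ rather than just in $H_1$, and without it the two bounds above only combine to a tautology.
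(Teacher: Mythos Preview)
The paper does not give its own proof of this lemma: it is imported verbatim from \cite{osin_lacunary} (Lemma~4.4 there) and used as a black box. Your argument is correct and is essentially the standard proof from that reference --- the contiguity subdiagram yields $p_2 = s_1 p_1 s_2$ in $H$ (no $\mathcal{R}$-cells in $\Gamma$), the cell $\Pi$ yields $p_1 = \bar p_1^{-1}$ in $H_1$, and the two geodesicity hypotheses combine to force $|p_1| \leq |\partial\Pi|/2 + 2\epsilon$. Your caveat about the Olshanskii convention on contiguity subdiagrams is well placed; the present paper relies on that same convention explicitly in the proof of Lemma~\ref{lemma_KstaysQuasiConvex_free}.
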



The following Lemma constitutes the main technical tool for our proofs. It is the key observation that a $C(\epsilon, \mu, \rho)$ quotient of a hyperbolic group is again hyperbolic. More detailed, it gives a geometric description of the geodesic segments in the Cayley graph of the quotient.

\begin{lemma}[\cite{osin_lacunary}]\label{lemma_46Lacunary}
Suppose that $H$ is a $\delta$-hyperbolic group having presentation $\left<S|\mathcal{O}\right>, \epsilon
 > 2 \delta, 0 < \mu \leq 0.01$, and $\rho$ is large enough (it suffices to choose $\rho > 10^6 \epsilon /\mu$). Let $H_1$ be
is a finite symmetrized set of words in $S^{\pm 1}$ satisfying the $C(\epsilon, \mu, \rho)$-condition. Then the following
statements hold.

\begin{enumerate}
\item Let $\Delta$ be a minimal disc diagram over $H_1=\left<S \mid \mathcal{O} \cup \mathcal{R}\right>$. Suppose that $\partial
\Delta = q^1\dots q^t$, where the labels of $q^1, \dots, q^t$ are geodesic in $H$ and $t \leq 12$. Then, provided $\Delta$ has an
$\mathcal{R}$-cell, there exists an $\mathcal{R}$-cell $\Pi$ in $\Delta$ and disjoint $\epsilon$-contiguity subdiagrams $\Gamma_1,
\dots, \Gamma_t$ (some of them may be absent) of $\Pi$ to $q^1, \dots, q^t$ respectively such that 
\[(\Pi, \Gamma_1, q^1) + \cdots + (\Pi, \Gamma_t, q^t) > 1 - 23 \mu.\]
\item $H_1$ is a $\delta_1$-hyperbolic group with $\delta_1 \leq 4r$ where $r = \max\{|R| \mid R \in \mathcal{R}\}$.
\end{enumerate}
\end{lemma}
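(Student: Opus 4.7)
The plan is to proceed along the standard small-cancellation-over-hyperbolic-groups template, treating part (1) as a Greendlinger-type statement about minimal van Kampen diagrams and part (2) as a thin-triangles argument that feeds on (1). Since we have already invoked Lemma~\ref{lemma_44Lacunary}, which controls the contiguity of a single $\mathcal{R}$-cell to a single boundary geodesic arc, the main work for (1) is to combine that bound with a cell-to-cell bound coming from minimality and then run a planar Euler count.

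For part (1), I would first exploit minimality of $\Delta$ to bound contiguities between pairs of $\mathcal{R}$-cells. Suppose $\Pi, \Pi'$ are two distinct $\mathcal{R}$-cells with an $\epsilon$-contiguity subdiagram $\Gamma$ of degree greater than $\mu$. Then a subword of the label of $\partial \Pi$ of length $> \mu\,|\partial \Pi|$ is conjugated by a word of length at most $\epsilon$ to a subword of the label of $\partial \Pi'$, and by the definition of an $\epsilon$-piece the condition $C(\epsilon,\mu,\rho)$ forces $Y R Y^{-1} = R'$ in $H$. This allows one to fold $\Pi$ and $\Pi'$ together and reduce the number of cells, contradicting minimality. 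Hence every interior cell-cell contiguity has degree at most $\mu$. Combined with the $\tfrac{1}{2} + 2\epsilon/|\partial \Pi|$ bound from Lemma~\ref{lemma_44Lacunary}, I would then set up a planar \emph{contiguity graph} whose vertices are the $\mathcal{R}$-cells together with the boundary components $q^1,\dots,q^t$ and whose edges are maximal $\epsilon$-contiguity subdiagrams. Euler's formula $V - E + F = 2$, applied in conjunction with the two degree bounds above and the hypothesis $\rho > 10^6 \epsilon/\mu$, yields a linear inequality on the total weight distributed around the cells. This inequality forces at least one $\mathcal{R}$-cell $\Pi$ to carry total contiguity to the boundary greater than $1 - 23\mu$, with the precise constant $23$ emerging from the combinatorial count against the $t \leq 12$ boundary components.

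For part (2), I would show $\delta_1$-hyperbolicity of $H_1$ with $\delta_1 \leq 4r$ by proving that every geodesic triangle in $\mathrm{Cay}(H_1,S)$ is uniformly thin. Given a geodesic triangle with sides $q^1,q^2,q^3$, form a minimal disc diagram $\Delta$ over $\langle S\mid \mathcal{O}\cup\mathcal{R}\rangle$; the labels of the $q^i$ are geodesic in $H_1$ hence geodesic in $H$ as well. If $\Delta$ has no $\mathcal{R}$-cells, thinness is inherited from $\delta$-hyperbolicity of $H$. Otherwise apply part (1) with $t=3$: some $\mathcal{R}$-cell $\Pi$ has $(1-23\mu)|\partial\Pi|$ of its boundary $\epsilon$-close to $q^1\cup q^2\cup q^3$. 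Since $|\partial\Pi|\leq r$, the subdiagram bounded by $\Pi$ together with its contiguity shadows has diameter $O(r)$. Excising this subdiagram and replacing the affected boundary arcs by their contiguity shadows reduces the number of $\mathcal{R}$-cells; induction on that number shows that any point on one side of the original triangle is within $4r$ of the union of the other two.

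The hardest step will be the precise Euler-count in part (1): it must simultaneously handle cell-cell contiguities, cell-boundary contiguities to each of the (up to twelve) arcs, absent contiguity subdiagrams, and pairs of boundary components meeting at the vertices of $\partial\Delta$, all while tracking the sharp constant $23\mu$. Carrying out this bookkeeping uniformly for every $t\leq 12$ is the technical heart of the Olshanskii-style argument and occupies the bulk of the proof in \cite{osin_lacunary} and \cite{olshanskii_residualing}; the rest of the lemma is a fairly direct consequence once this estimate is in hand.
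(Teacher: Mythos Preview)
The paper does not prove this lemma at all: it is quoted verbatim from \cite{osin_lacunary} and used as a black box, with no argument given in the present paper. So there is no ``paper's own proof'' to compare your proposal against. Your sketch is a reasonable high-level outline of the Olshanskii-style argument that actually appears in \cite{osin_lacunary} (and ultimately in \cite{olshanskii_residualing}), and you yourself note at the end that the technical heart lives in those references; but for the purposes of this paper the correct ``proof'' is simply the citation.
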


Part one of the above Lemma states that the sides of a geodesic triangles in the quotient are $\epsilon$-close to an added relation for a large part.

\section{Morse Geodesics}

In this section we show that a lacunary hyperbolic group which are graded small cancellation groups starting with the free group $F_4$ contains Morse geodesics. The asymptotic cones of such groups are circle trees. The main idea of our construction is to show that there are geodesic rays such that their limit in the asymptotic cone is a transversal, i.e.\;only touches any circles in at most one point. These circles are limits of loops in the Cayley-graph which come from the relations in the group. Hence we aim to find paths, such that they only 'fellow-travel' with relations for a short amount of time. Depending on the length of the relation, we show that this common proportion becomes smaller and smaller, hence in the limit in the asymptotic cone, the path will only touch the circle. The proof is divided into three main sections.

\medskip

In the first subsection we prove some combinatorial observations regarding the structure of relations in such graded small cancellation groups. In particular, we count how many different subwords of a certain length there are. Then we show that in the free group $F_4$ there are many other infinite words which do not contain subwords of the relations of at least a certain length. We conclude that these valid words from a $7$-regular tree in the Cayley-graph of $F_4$.

\medskip

In the second subsection, we show that this tree of valid words remains a convexly embedded tree in a quotient formed under the graded small cancellation restrictions. We give structure theorems about triangles attached to this tree, which will allow us in the next subsection to conclude that there are no simple geodesic hexagons in any group $G_i$ with one side on this tree.

\medskip

In the last subsection, we conclude that the geometry of this embedded tree is such, that in the limit group every bi-infinite path on it will be a Morse geodesic. This then gives us the conclusion that there are infinite, unbounded torsion groups which contain Morse geodesics but no Morse elements.

\medskip

Let for the rest of this section $G$ be a lacunary hyperbolic group that satisfies a graded small cancellation condition. The group $G$ will be the limit of groups $G_i$ where $G_0 = F_4$, the free group in four generators.

\subsection{Combinatorial observations}

We discuss the combinatorics of relations in graded small cancellation groups. We will conclude that the set of words in $F_4$ which will become relations in $G$ is very 'sparse' in the Cayley graph of $F_4$. In particular, the small cancellation condition implies that the set of 'longer' pieces of such relations is very sparse as well. 

\begin{lemma}\label{lemma_sc_length}
Let $\left<G | \mathcal{Q} \right>$ be a graded small cancellation presentation with sequences $\mu=\left(\mu_n\right), \rho=\left(\rho_n\right)$ and $G_0=F_4$. Then the set $\mathcal{Q}$ contains at most $8^{\mu_n\sigma_n}$ relations of length $l$ for any length $\rho_n < l < \sigma_n$.
\end{lemma}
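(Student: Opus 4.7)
The plan is to show that each relation of length $l$ in $\mathcal{R}_n$ is determined by its prefix of length roughly $\mu_n \sigma_n$, and then to count the number of candidate prefixes in the free group $F_4$.

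First I would observe that because the length ranges of the $\mathcal{R}_i$ are well separated (by $(Q_3)$ and Proposition \ref{prop_inequalities}, $\rho_{i+1} > 8 \sigma_i$), every relation in $\mathcal{Q}$ of length $l \in (\rho_n, \sigma_n)$ must lie in $\mathcal{R}_n$. Next, suppose $R_1 \not\equiv R_2$ are distinct relations in $\mathcal{R}_n$ of length $l$ sharing a common prefix $U$; taking $U' := U$ and $Y := Z := \emptyset$ in the definition of an $\epsilon_n$-piece, $U$ qualifies as an $\epsilon_n$-piece of $R_1$, so the $C(\epsilon_n, \mu_n, \rho_n)$-condition forces $|U| < \mu_n l < \mu_n \sigma_n$. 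Hence the map sending a relation of length $l$ to its prefix of length $N := \lceil \mu_n \sigma_n \rceil$ is injective; and in the degenerate case $l < N$ the bound $8^l \leq 8^{\mu_n \sigma_n}$ is already trivial.

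Finally, the number of reduced words of length $N$ in $F_4$ is at most $8 \cdot 7^{N-1}$, and this is bounded above by $8^{\mu_n \sigma_n}$ provided $\log 8 \leq \mu_n \sigma_n \log(8/7)$, i.e.\ provided $\mu_n \sigma_n$ is at least around $16$. By Proposition \ref{prop_inequalities}, $\mu_n \sigma_n \geq \mu_n \rho_n > K = 10^6$, so this inequality holds with enormous room to spare.

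The step I would want to scrutinise most carefully is the piece argument: the definition of an $\epsilon_n$-piece requires $Y R_1 Y^{-1} \neq R_2$ in $G_{n-1}$, so if two distinct reduced words in the symmetrized set $\mathcal{R}_n$ happen to be conjugate in $G_{n-1}$ by an arbitrarily short element, the common prefix might not literally qualify as a piece. This is the main subtlety, but since the required bound is very loose (the gap between $8 \cdot 7^N$ and $8^{\mu_n \sigma_n}$ is at least a factor of $(8/7)^{10^6}$) any extra bookkeeping to handle such degenerate cases will be easily absorbed.
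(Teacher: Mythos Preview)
Your proposal is correct and follows essentially the same approach as the paper: assign to each relation a word of length about $\mu_n\sigma_n$ (the paper picks ``a subword'', you pick the prefix), argue via the small-cancellation condition that this assignment is injective, and then bound by the number of words of that length in $F_4$. Your version is in fact a little tidier than the paper's---you make the choice of subword canonical by taking the prefix, you do the count $8\cdot 7^{N-1}\le 8^{\mu_n\sigma_n}$ honestly rather than just writing $8^{\mu_n\sigma_n}$, and you explicitly flag the conjugacy clause $YRY^{-1}\neq R'$ in the piece definition, which the paper silently ignores.
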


\begin{proof}
Let $R$ be a relation in $\mathcal{Q}$ such that $l=|R|$ satisfies $\sigma_n > l > \rho_n$ for some $n$. Define with this $n$ the set \[\mathcal{C}_n = \{ w \in F_4 \mid l(w) = \lfloor \mu_n\sigma_n \rfloor \}.\]
Considering the growth of the free group $F_4$, we get immediately that $|\mathcal{C}_n| \leq 8^{\mu_n\sigma_n}$. Then there exists $w_R \in \mathcal{C}_n$ such that $w_R$ is a subword of $R$. The small cancellation condition now requires that there does not exist another $R_1 \in \mathcal{Q}$ such that $w_R$ is a subword of $R_1$. Hence the maximal number of relations of length $\sigma_n > l > \rho_n$ is bounded by the number of different words of length $\mu_n \cdot \sigma_n$.
\end{proof}

\begin{remark}
  Lemma \ref{lemma_sc_length} gives a very coarse estimate. We exclude only relations which contain the exact same subword of a given length exceeding the parameters of small cancellation. The graded small cancellation condition actually forbids not just common subwords, but anything in an $\epsilon_n$-neighborhood of a given subword.
\end{remark}

\textbf{Notation:} Let $W,w$ be words in the free group $F_4$. If $w$ is a subword of $W$, we denote this by $w \subset W$.

\begin{lemma}\label{lemma_sc_numberPieces}
Let $\langle G \mid \mathcal{Q} \rangle$ be a graded small cancellation presentation. Denote by 
\[\mathcal{B}(\sqrt{\mu_n}\sigma_n) = \{w \mid l_{F_4}(w) = \sqrt{\mu_n}\sigma_n, \h \exists R \in \mathcal{Q}: \rho_n < |R| < \sigma_n \mbox{ and } w \subset R\}\]  the set of all subwords $w$ of relations $R$ in $\mathcal{Q}$ of length $|w| = \sqrt{\mu_n}\sigma_n$ if $\rho_n < |R| < \sigma_n$. Then  \[| \mathcal{B}( \sqrt{\mu_n} \sigma_n)| < \sigma_n \cdot 8^{\mu_n\sigma_n}.\] In particular, if $k=\sqrt{\mu_n}\sigma_n$, then $|\mathcal{B}(k)| \leq 8^{k/2}.$
\end{lemma}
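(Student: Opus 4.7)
The plan is a two-step counting argument that builds directly on Lemma \ref{lemma_sc_length}. First I would bound the total number of relations in $\mathcal{Q}$ whose length lies in the window $(\rho_n,\sigma_n)$, and then multiply by the maximal number of length-$\sqrt{\mu_n}\sigma_n$ subwords each such relation can contribute. Reading the proof of Lemma \ref{lemma_sc_length} carefully, each relation $R$ with $\rho_n<|R|<\sigma_n$ is assigned a distinguishing subword $w_R\in F_4$ of length $\lfloor\mu_n\sigma_n\rfloor$ that cannot appear as a subword of any other $R'\in\mathcal{Q}$. Since $F_4$ contains at most $8^{\mu_n\sigma_n}$ reduced words of that length, the \emph{total} number of relations with length in $(\rho_n,\sigma_n)$ is at most $8^{\mu_n\sigma_n}$.

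Second, each such relation has length at most $\sigma_n$, so it contributes fewer than $\sigma_n$ subwords of length $\sqrt{\mu_n}\sigma_n$. Multiplying the two estimates yields $|\mathcal{B}(\sqrt{\mu_n}\sigma_n)|<\sigma_n\cdot 8^{\mu_n\sigma_n}$, which is the first claimed inequality.

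For the sharper ``in particular'' statement, set $k=\sqrt{\mu_n}\sigma_n$ and rewrite the target bound $\sigma_n\cdot 8^{\mu_n\sigma_n}\leq 8^{k/2}$ in logarithmic form as
\[
\log_8\sigma_n + \mu_n\sigma_n \;\leq\; \tfrac{1}{2}\sqrt{\mu_n}\,\sigma_n.
\]
Using $\mu_n\leq 0.01$ from Proposition \ref{prop_inequalities}, one has $\sqrt{\mu_n}/2\geq 5\mu_n$, so it suffices to verify $\log_8\sigma_n\leq 4\mu_n\sigma_n$. This is straightforward since, by Proposition \ref{prop_inequalities}, $\mu_n\sigma_n\geq \mu_n\rho_n>K\epsilon_n\geq K=10^6$, while $\log_8\sigma_n$ grows only logarithmically in $\sigma_n$.

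The one point requiring care, and the only real bookkeeping obstacle, is confirming that the polynomial factor $\sigma_n$ actually fits inside the exponential gap between $\sqrt{\mu_n}/2$ and $\mu_n$ in the exponent. This works precisely because $\mu_n$ is small enough that $\sqrt{\mu_n}$ dominates $\mu_n$ with comfortable slack, an immediate consequence of the graded small cancellation hypothesis $\mu_n\leq 0.01$ together with $\mu_n\sigma_n\geq K=10^6$.
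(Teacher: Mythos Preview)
Your first step---the counting that gives $|\mathcal{B}(\sqrt{\mu_n}\sigma_n)|<\sigma_n\cdot 8^{\mu_n\sigma_n}$---is correct and matches the paper's argument exactly.

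The second step, however, contains a genuine gap. Your reduction to the sufficient condition $\log_8\sigma_n\le 4\mu_n\sigma_n$ is logically valid, but that sufficient condition is \emph{not} implied by the hypotheses $\mu_n\le 0.01$ and $\mu_n\sigma_n>K=10^6$ alone. Concretely, take $\sigma_n=8^{10^7}$ and $\mu_n=2\cdot 10^6/\sigma_n$: then $\mu_n\sigma_n=2\cdot 10^6>K$ and $\mu_n\ll 0.01$, yet $\log_8\sigma_n=10^7>8\cdot 10^6=4\mu_n\sigma_n$. The point is that $\mu_n\sigma_n$ can remain bounded near $K$ while $\sigma_n\to\infty$ (and $\mu_n\to 0$), so $\log_8\sigma_n$ can exceed any fixed multiple of $\mu_n\sigma_n$. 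By passing from $\tfrac12\sqrt{\mu_n}$ to $5\mu_n$ you discarded exactly the growing factor $1/(2\sqrt{\mu_n})$ that is needed to absorb $\log_8\sigma_n$.

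The paper handles this differently: it divides through by $\sigma_n$ and uses the monotonicity of $x\mapsto(\ln x)/x$ together with $\sigma_n>K/\mu_n$ to bound $(\ln\sigma_n)/\sigma_n$ by a quantity depending only on $\mu_n$, reducing the whole inequality to one in the single variable $m_n=1/\mu_n\ge 100$, which is then checked directly. An easy way to repair your approach in the same spirit is to compare $\log_8\sigma_n$ against $\sqrt{\mu_n}\,\sigma_n$ rather than $\mu_n\sigma_n$: since $\sqrt{\mu_n}\,\sigma_n=\sqrt{\mu_n\sigma_n}\cdot\sqrt{\sigma_n}>10^3\sqrt{\sigma_n}$ and $\log_8\sigma_n=o(\sqrt{\sigma_n})$, one gets $\log_8\sigma_n\le 0.4\,\sqrt{\mu_n}\,\sigma_n$ for all $\sigma_n\ge 10^8$, which together with $\mu_n\sigma_n\le 0.1\,\sqrt{\mu_n}\,\sigma_n$ yields the desired bound.
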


\begin{proof}Making a coarse assumption that a new subword starts at every point of a relation $R$, we get that $R$ has at most $\sigma_n \geq |R|$ subwords of length $\sqrt{\mu_n}\sigma_n$. From Lemma \ref{lemma_sc_length} we get that we have at most $8^{\mu_n \sigma_n}$ relations. Hence we have at most $\sigma_n \cdot 8^{\mu_n\cdot \sigma_n}$ such disjoint pieces, which proves the first statement.

\medskip 

For the second part, we use $\mu_n\leq 1/100$ and $\sigma_n 8^{\mu_n\sigma_n}=8^{\log_8 \sigma_n + \mu_n \sigma_n}$ to prove:

\[8^{\log_8 \sigma_n + \mu_n \sigma_n}  \leq 8^{\sqrt{\mu_n}\sigma_n/2}.\] It is enough to show the inequality  is true if we replace $\log_8(\sigma_n)$ by $\ln(\sigma_n)$ as $8 >e$ and hence $\log_8(\sigma_n) < \ln(\sigma_n)$. This inequality then transforms into 
\[\mu_n \sigma_n + \ln \sigma_n \leq \frac{\mu_n^{\frac12} \sigma_n}2 \]
\begin{equation}2\mu_n + \frac{2 \ln \sigma_n}{\sigma_n} \leq \sqrt{\mu_n}.\label{eq_mu_ln_root}\end{equation}
By Proposition \ref{prop_inequalities} we have $\rho_n> 10^8$  and $\mu_n\rho_n> K \epsilon_n> K$ which yields that \begin{equation}\sigma_n>K/\mu_n\label{eq_rho_mu}\end{equation} where $K=10^6$. The function  $f(x) = 2 \ln(x)/x$ is descending for $x>e$. Hence we can put the inequality \eqref{eq_rho_mu} into  the function $f(x)$ to obtain  \[\frac{2\ln \sigma_n}{\sigma_n} < \frac{2 \ln\left(\frac{K}{\mu_n}\right)}{\frac{K}{\mu_n}}=\frac{\mu_n \cdot 2 \left(\ln(K) - \ln(\mu_n)\right)}{K}.\] Put into \eqref{eq_mu_ln_root} we get
\[2\mu_n\left(1+\frac{\ln(K)}{K} - \frac{\ln(\mu_n)}{K}\right) < \mu_n^{\frac12}\]
\begin{equation}2\left(1+\frac{\ln(K)}{K} + \frac{\ln \left(\frac{1}{\mu_n}\right)}{K} \right) < \mu_n^{-\frac12}.\label{eq_mu_1}\end{equation}
We set for readability $m_n=(\mu_n)^{-1}$. In particular, Proposition \ref{prop_inequalities} implies $m_n \geq 100$.  Rewritten, \eqref{eq_mu_1} becomes 
\[2(1+\ln(K)/K+\ln(m_n)/K)<m_n^{1/2}. \] Now $K=10^6$, so using again that $f(x)$ is descending we get that it is enough to check
\[2(1+\ln(10^6)/10^6+\ln(m_n)/10^6)<m_n^{1/2}\]   which is weaker than 
\[3+\frac{2 \ln (m_n)}{10^6} < \sqrt{m_n}\] which is true in particular if 
\[\frac{3}{\sqrt{m_n}} +\frac{2 \ln (m_n)}{10^6\cdot \sqrt{m_n}} < 1.\]
The function on the left side of the inequality is again descending for all $m_n > 1$ and has value less than $1$ for $m_n=100$. Hence $\mathcal{B}(k) \leq 8^{k/2}$.
\end{proof}

We explain the notation used for the rest of this section. We work with a lacunary hyperbolic group $G$, which is by \cite{osin_lacunary} a direct limit of hyperbolic groups $G_i$. Here we assume $G_0 = F_4$: 
\[G_0 = F_4 \twoheadrightarrow G_1 \twoheadrightarrow G_2 \twoheadrightarrow \dots\] where $F_4=\left<x_1,x_2,x_3,x_4\right>$ is a $4$-generated free group and $G_i$ is a $\delta_i$-hyperbolic group satisfying a $C(\epsilon_i, \mu_i, \rho_i)$-condition. We recall there is a one-to-one correspondence between reduced words in any group $G$ with generating set $S$ and geodesics in its Cayley-graph $Cay(G,S)$. Further, multiple reduced words (geodesic segments) can represent the same element of $G$. Hyperbolicity can be described as uniformly bounding the distance between two geodesic segments representing the same element (\cite{papasouglu}). 

\begin{lemma}\label{lemma_comb_free}
Let $\mathcal{N}$ be a countable set of words representing geodesic segments in the generators $x_1,\dots, x_4, x_1^{-1}, \dots, x_4^{-1}$ of free group $F_4$, sorted ascending by their length and further lexicographically. We assume \begin{enumerate}[(a)]
                                                                                                                                                       \item each $N_i \in \mathcal{N}$ is such that $n_i=l(N_i) \geq 10^4$,
                                                                                                                                                       \item \label{lemma_comb_free:cond2} there are at most $8^{n_i/2}$ segments of length $n_i$,
                                                                                                                                                       \item and $n_{i+1} - n_i >  10$ if $n_{i+1} \neq n_i$.
                                                                                                                                                      \end{enumerate}Then there is an isometrically embedded $7$-regular tree of infinite paths in $Cay(F_4)$ (respectively infinite words in $F_4$) with labels letters $x_1, \dots, x_4, x_1^{-1}, \dots, x_4^{-1}$ such that none of these paths fully contains any translate of any of the $N_i$ (respectively $n_i$ as a subword). 
\end{lemma}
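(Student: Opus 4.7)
The plan is to build $T$ as a $7$-regular rooted subtree of $\operatorname{Cay}(F_4)$ with root $e$, by specifying at each vertex $v \in T$ a single excluded ``outgoing'' direction $b(v) \in \{x_1^{\pm 1},\dots,x_4^{\pm 1}\}$. The remaining non-parent directions then give the children of $v$ in $T$, producing degree seven at every vertex. Because $\operatorname{Cay}(F_4)$ is itself a tree, any subgraph is automatically isometrically embedded, so the entire content of the lemma lies in choosing $b$ so that no simple path of $T$ labels a word containing any $N_i$ as a sub-factor.

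I would then process vertices in BFS order from $e$. For a vertex $v$ at depth $k$ with label $w_v$, call a non-parent direction $l$ \emph{dangerous at $v$} if the concatenation $w_v \cdot l$ ends with some $N_i$ of length $n \leq k+1$; equivalently, the last $n-1$ letters of $w_v$ followed by $l$ spell out $N_i$. The rule for choosing $b$ is: if at most one direction is dangerous at $v$, set $b(v)$ equal to it (or pick freely if none); if two or more are dangerous at $v$, retroactively exclude $v$ from $T$ by declaring $b(p(v))$ to be the direction from $p(v)$ to $v$, pruning the subtree below $v$ and letting $p(v)$ pick up a different child in its place. So long as these ``doubly dangerous'' vertices form a sparse set, every surviving vertex of $T$ will retain its seven neighbours and, by construction, no path of $T$ will realise a forbidden factor.

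The heart of the proof will be a counting argument showing that pruning terminates. Using hypothesis (b), the ``density'' of forbidden completions at depth $k$ is controlled by a sum whose $n$-th term is at most $|\mathcal{N}_n|/7^{n-1} \leq 7(2\sqrt{2}/7)^n$, which by (a) (all $n_i \geq 10^4$) is astronomically small and summable; the relevant comparison is that the branching $6$ of $T$ strictly dominates the growth rate $\sqrt{8} < 6$ of the forbidden set. Hypothesis (c) ensures that forbidden words of different lengths cannot pile up simultaneously at one vertex. The main obstacle, and the point where all three hypotheses must get used together, is converting this global estimate into a per-vertex bound guaranteeing that the pruning cascade terminates and that each retained vertex of $T$ really has all seven neighbours; once this combinatorial claim is established, a standard K\"onig-type compactness argument assembles the finite-depth constructions into the desired infinite $7$-regular tree, and the avoidance of every translate of every $N_i$ on every bi-infinite simple path of $T$ is immediate from the construction.
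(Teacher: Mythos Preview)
Your proposal is a plan rather than a proof: the step you yourself flag as ``the main obstacle''---converting the global density estimate into a per-vertex guarantee that the retroactive pruning cascade terminates and leaves a genuinely $7$-regular tree---is never carried out, and without it nothing has been established. There is also a concrete error in how you invoke hypothesis (c): the spacing condition $n_{i+1}-n_i>10$ concerns the \emph{lengths} appearing in $\mathcal{N}$ and says nothing about whether two forbidden words of different lengths can be simultaneously one letter from completion at the same vertex. An $N_i$ of length $n_i$ and an $N_j$ of length $n_j\gg n_i$ can perfectly well have their respective $(n_i-1)$- and $(n_j-1)$-letter prefixes both occurring as suffixes of $w_v$, making two distinct directions dangerous regardless of how far apart $n_i$ and $n_j$ are.

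The paper avoids the whole cascade issue by a global counting recursion, which is both simpler and sidesteps any per-vertex analysis. One sets $F(n)=\{w\in F_4: |w|=n,\ \text{no }N_i\text{ is a subword of }w\}$ and $f(n)=|F(n)|$. A one-letter extension of a word in $F(n-1)$ fails to lie in $F(n)$ only if it ends in some $N_i$, in which case its length-$(n-n_i)$ prefix lies in $F(n-n_i)$; hence
\[
f(n)\ \geq\ 7f(n-1)\ -\ \sum_{n_i\leq n} f(n-n_i).
\]
Feeding in the inductive bound $f(n-n_i)\leq 6^{\,1-n_i}f(n-1)$ and hypothesis (b) (at most $8^{m/2}$ forbidden words of each length $m$), the subtracted sum is at most $f(n-1)\cdot 6\sum_{m\geq 10^4}(\sqrt{8}/6)^m<f(n-1)$, giving $f(n)>6f(n-1)$. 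From this exponential growth one reads off the $7$-regular subtree. Only hypotheses (a) and (b) enter; condition (c) is not used.
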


 \begin{proof}
 Let $F(n)$ be the following subset of $F_4$:
 \[F(n) = \{ w \in F_4 \mid  l(w) = n, \h N_i \mbox{ is not a subword of } w\}.\]
 
 Denote the number of such words of length $n$ by $f(n)=|F(n)|$. We show $f(n)> 6 \cdot
 f(n-1)$ by induction. This is clear for $n_0=10^4$ as all words are allowed. Iterating the inequality leads to \begin{equation}f(n-i) < 6^{1-i}
 f(n-1).\label{eq_induction_free}\end{equation} 
 Now we extend the words of length $n-1$ in $F_4$ to words of length $n$ in $F_4$ with one of the $8$ letters $x_1,\dots, x_4, x_1^{-1}, \dots, x_4^{-1}$. Then the number of words of length $n$ of distance $>0$ from any $N_i$ can be computed as follows: take all possible reduced (as words in $F_4$) extensions ($7\cdot f(n-1)$) and subtract all words which would now contain any of the $N_i$ but previously did not. To count such words, we consider all words that did not contain any $N_i \in \mathcal{N}$ until length $n-n_i$. This is, all words that could possibly contain $n_i-1$ letters of any of the words $N_i$ and hence could be extended to some $N_i$ by adding one letter. This number is expressed as $\sum_{n_i < n} f(n-n_i)$. We get 
  \begin{equation} \label{eq_recursion}f(n) \geq 7 \cdot f(n-1) - \sum_{n_i < n}f(n-n_i).\end{equation}  We then use \eqref{eq_induction_free} to replace all $f(n-n_i)$ in the second term by $f(n-1)\cdot 6^{1-n_i}$. 

\[f(n) > 7 \cdot f(n-1) - \sum_{n_i<n} f(n-1) \cdot 6^{1-n_i}.\]  
  
We now use condition \eqref{lemma_comb_free:cond2} on $\mathcal{N}$ that there are at most $8^{n_i/2}$ words of length $n_i$ in $\mathcal{N}$:
  
  \begin{equation} \label{eq_recursion4}f(n) \geq 7 \cdot f(n-1) - f(n-1) \cdot \sum_{i <  n} 8^{i/2} \cdot 6^{1-i}\end{equation}

 It can be verified that $\sum_{i < n} 8^{i/2} \cdot 6^{1-i}=6  \cdot \sum_{i=10^4}^\infty \left(\frac{\sqrt{2}}{3}\right)^i  < 1$. Using this  we get \[f(n) > 6\cdot f(n-1). \] As a consequence, $f(n)$ is a strictly increasing function. We have hence shown the existence of infinitely many infinite words in $F_4$, none of which contain any $N_i \in \mathcal{N}$ as subwords. The fact that we get such a tree of paths follows from having $6$ options to expand in every step. Hence we have a $7$-regular tree of such paths that embeds into the Cayley graph of $F_4$.
 \end{proof}

 An intuitive description of the above Lemma is that the rays of infinite words in $F_4$ which will eventually become relations in the group $G$, are very sparse in the Cayley graph of $F_4$. In fact, at every point there are $6$ directions which are on a Morse ray and increasing the length. Graphically speaking, it is enough to cut off one branch of the tree $Cay(F_4)$ at every point on a path starting from $1$.

\subsection{Building the quotients}
 
In this subsection we carry our findings about certain paths avoiding certain words in $F_4$ over to a $C(\epsilon, \mu, \rho)$-quotient. We describe how the images of the paths we found in $F_4$ are still geodesics in such a quotient, embed convexly and have no common 'long' parts with any of the relations in $\mathcal{Q}$.

\medskip

The following lemma describes combinatorial properties of the set of all 'sufficiently long' subwords of all relations in a graded small cancellation group. 
 
\begin{lemma}\label{lemma_setN_free}
Assume $\left<G | \mathcal{Q} = \bigcup \mathcal{R}_i \right>$ satisfies a graded small cancellation assumption with $G = \lim_{i \rightarrow \infty} G_i$ with $G_0 = F_4$. Let $\mathcal{N}$ be the following set:
\[\mathcal{N} = \{ w \mid \exists i: \exists R \in \mathcal{R}_i: w \subset R, |w| = \lfloor \sqrt{\mu_i} \sigma_i \rfloor  \mbox{ for } \rho_i < |R| < \sigma_i\}.\]
Then this set satisfies the following: 

\begin{enumerate}[(i)]
 \item \label{lemma_setN_part_1_free} each $N \in \mathcal{N}$  is such that $l(N) \geq 10^4 $, 
 \item \label{lemma_setN_part_2_free} there are at most $8^{n/2}$ words of length $n$ in $\mathcal{N}$,
 \item \label{lemma_setN_part_3_free} let $N_1, N_2 \in \mathcal{N}$ such that $|N_1| \neq |N_2|$. Then $||N_1| -|N_2|| > 10$.
\end{enumerate}

\end{lemma}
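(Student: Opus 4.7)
The three parts can all be derived from Proposition \ref{prop_inequalities} together with the counting bound already established in Lemma \ref{lemma_sc_numberPieces}. For part (i), I would rewrite $(\sqrt{\mu_i}\sigma_i)^2 = (\mu_i\sigma_i)\cdot \sigma_i$ and feed in the inequalities $\mu_i\sigma_i > K = 10^6$ and $\sigma_i \geq K/\mu_i \geq 100\,K$ collected in Proposition \ref{prop_inequalities} (the latter from $\mu_i \leq 0.01$). This yields $(\sqrt{\mu_i}\sigma_i)^2 \geq 10^{14}$, so $\lfloor \sqrt{\mu_i}\sigma_i \rfloor > 10^4$ is immediate.

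The main work lies in part (iii), for which the plan is to show that $\sqrt{\mu_{i+1}}\sigma_{i+1}$ exceeds $\sqrt{\mu_i}\sigma_i$ by orders of magnitude. The two ingredients I would combine are the growth rate $\sigma_{i+1} \geq \rho_{i+1} > 10^8 \sigma_i$ from $(Q_3)$, and the chain
\[\mu_{i+1}\sigma_{i+1} \;\geq\; \mu_{i+1}\rho_{i+1} \;>\; K\epsilon_{i+1} \;>\; 8K\sigma_i\]
obtained by combining $(Q_2)$ and $(Q_3)$. Multiplying these two estimates and extracting a square root gives $\sqrt{\mu_{i+1}}\sigma_{i+1} \geq \sigma_i\sqrt{8\cdot 10^{14}}$, while the trivial bound $\sqrt{\mu_i}\sigma_i \leq 0.1\,\sigma_i$ follows from $\mu_i \leq 0.01$. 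Since $\sigma_i \geq 10^8$ by part (i), the gap is enormous; in particular the integer parts of distinct values of $\sqrt{\mu_i}\sigma_i$ are separated by much more than $10$.

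Part (ii) then drops out as a corollary. Lemma \ref{lemma_sc_numberPieces} bounds the number of subwords of length $\lfloor \sqrt{\mu_i}\sigma_i\rfloor$ coming from relations in $\mathcal{R}_i$ by $8^{\sqrt{\mu_i}\sigma_i/2}$ for each single index $i$. By part (iii), any integer $n$ can equal $\lfloor \sqrt{\mu_i}\sigma_i\rfloor$ for at most one $i$, so the contributions never accumulate and the desired bound $|\{w \in \mathcal{N} : |w|=n\}| \leq 8^{n/2}$ follows directly. The only nuisance I anticipate is the usual bookkeeping around the floor function in part (iii), but because the gap grows linearly in $\sigma_i$ this is absorbed without real difficulty.
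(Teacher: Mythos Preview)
Your proposal is correct and follows essentially the same route as the paper: all three parts are derived from the numerical inequalities collected in Proposition~\ref{prop_inequalities}, with (ii) reduced to Lemma~\ref{lemma_sc_numberPieces}. The only differences are cosmetic---you square and take roots where the paper instead invokes $\sqrt{\mu_n}\geq \mu_n$ directly---and your treatment of (ii) is in fact slightly more careful than the paper's, since you explicitly use (iii) to ensure that at most one index $i$ contributes words of a given length~$n$.
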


\begin{proof} 

\eqref{lemma_setN_part_1_free} We have $\mu_n \rho_n > K\epsilon_n$, and $\epsilon_n\geq 1$ for all $n$ (recall: $K=10^6$), which together give the statement. 

\eqref{lemma_setN_part_2_free} This is the statement of Lemma \ref{lemma_sc_numberPieces}.

\eqref{lemma_setN_part_3_free} We have to check that $\sqrt{\mu_{n+1}}\sigma_{n+1} - \sqrt{\mu_n}\sigma_n > 10$. We use that $\mu_n\sigma_n> K\epsilon_n > \frac{K}{8} \sigma_{n-1}$. This gives us $\sqrt{\mu_{n+1}}\sigma_{n+1} > \mu_{n+1}\sigma_{n+1} > K \epsilon_{n+1} > \frac{K}{8} \sigma_n$ since $\epsilon_{n+1} \geq 8\sigma_n$. Hence we get 
\[\frac{K}{8}\sigma_n - \mu_n\sigma_n > 10\]
\[\sigma_n \left( \frac{K}{8} - \sqrt{\mu_n} \right) > 10. \]
By the graded small cancellation condition $K=10^6$ and $\mu_n\leq 0.01$ and hence $\frac{K}{8}-\sqrt{\mu_n} >10^4$.  By using that $\sigma_n>10^8$ we get that this is always satisfied.
\end{proof}

The set $\mathcal{N}$ from the Lemma above is the set of all 'sufficiently long' subwords of all relations in $\mathcal{Q}$. We show in the next Corollary that the proportion of the subwords $w \in \mathcal{N}$ of a relation $R$ that is included in $\mathcal{Q}$ goes to $0$. 

\begin{cor}
Let $\mathcal{N}$ be as above. For $R \in \mathcal{Q}$ and denote by $k_R = \max\{ |w| \mid w \in \mathcal{N}, w \subset R \}$. Then $\lim_{i \rightarrow \infty} (\max_{R \in \mathcal{R}_i} \{k_R/|R|\}) = 0$.
\end{cor}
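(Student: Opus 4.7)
The plan is to decompose any $w\in\mathcal{N}$ with $w\subset R$ according to the index $j$ for which $|w|=\lfloor\sqrt{\mu_j}\sigma_j\rfloor$, and to bound $|w|/|R|$ separately in the regimes $j>i$, $j<i$, $j=i$. Throughout $R\in\mathcal{R}_i$, so $\rho_i\le|R|\le\sigma_i$.

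First I would rule out $j\ge i+1$. Using $\sqrt{\mu_j}>\mu_j$ for $\mu_j\in(0,1)$, the chain
\[
\sqrt{\mu_j}\sigma_j \;>\; \mu_j\sigma_j \;\ge\; \mu_j\rho_j \;>\; K\epsilon_j \;>\; 8K\sigma_{j-1} \;\ge\; 8K\sigma_i,
\]
(where the last inequality uses $j-1\ge i$) combined with $\sigma_i\ge|R|$ forces $|w|>|R|$, contradicting $w\subset R$. Hence $j\le i$.

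Next, for $j\le i-1$ I would use $|w|\le\sqrt{\mu_j}\sigma_j\le\sigma_{i-1}$. Combining $\epsilon_i>8\sigma_{i-1}$ with $\mu_i\rho_i>K\epsilon_i$ gives $\sigma_{i-1}<\mu_i\rho_i/(8K)$, so $|w|/|R|<\mu_i/(8K)$. Since $\mu_i\to 0$ by $(Q_2)$, this contribution is $o(1)$ uniformly in $R$. For $j=i$ I would use $|w|\le\sqrt{\mu_i}\sigma_i$ and $|R|\ge\rho_i$, together with the $O(\rho_n)$ clause built into $(Q_3)$, which implies $\sigma_i=O(\rho_i)$ (since $8\sigma_i<\epsilon_{i+1}=O(\rho_i)$). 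This yields $|w|/|R|\le\sqrt{\mu_i}\sigma_i/\rho_i=O(\sqrt{\mu_i})$, which again tends to $0$.

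Taking the maximum of the two bounds, $\max_{R\in\mathcal{R}_i}k_R/|R|=O(\sqrt{\mu_i})$, which converges to $0$ by Proposition~\ref{prop_inequalities}\eqref{inequ5}. The main obstacle, and the only step that is not pure bookkeeping with the graded small cancellation inequalities, is the case $j=i$ when $R$ is a short relation (length near $\rho_i$): without the control $\sigma_i=O(\rho_i)$ from $(Q_3)$, one could have $\rho_i<\sqrt{\mu_i}\sigma_i$, so that some $R$ of length near $\lfloor\sqrt{\mu_i}\sigma_i\rfloor$ would realize $k_R/|R|\approx 1$ and the corollary would fail. It is this implicit bound on $\sigma_i/\rho_i$ that makes the argument go through.
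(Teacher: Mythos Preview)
Your proof is correct and amounts to a fully worked-out version of the paper's one-line argument (``Follows by construction of $\mathcal{N}$ together with $\mu_i \rightarrow 0$''): the paper implicitly takes $k_R=\lfloor\sqrt{\mu_i}\sigma_i\rfloor$ for $R\in\mathcal{R}_i$ and lets $\mu_i\to 0$ do the rest. You make explicit both the case split on $j$ that justifies this identification and, importantly, the appeal to $\sigma_i=O(\rho_i)$ from $(Q_3)$, which the paper's proof uses tacitly but does not name.
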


\begin{proof}
Follows by construction of $\mathcal{N}$ together with $\mu_i \rightarrow 0$.
\end{proof}

The next lemma says that certain convex subsets of $C(\epsilon, \mu, \rho)$ groups stay convex in appropriate $C(\epsilon_1, \mu_1,\rho_1)$-quotients. We will apply this when passing from $G_i$ to its quotient $G_{i+1}$.

\medskip 

\textbf{Notation:} For better readability, we write $p$ instead of $|p|$ for the length of a geodesic segment $p$ when the context is clear.

\begin{lemma}\label{lemma_KstaysQuasiConvex_free} Let $Y$ be a convex connected subset of the Cayley graph of a $\delta$-hyperbolic group $H=\langle S \mid Q \rangle$ where $Q$ satisfies a $C(\epsilon,\mu,\rho)$ condition and $Y$ has a unique geodesic between any two points. Let $\mathcal{R}$ be elements of $H$, with 

\begin{enumerate}[(a)]
 \item \label{lemma_KstaysQuasiConvex_free_partII} any connected subset of any translate of a fixed geodesic representative of  $R$ which lies in $Y$ is shorter than $l(R)/8$ for every $R \in \mathcal{R}$,
 \item $\mathcal{R}$ is chosen such that $H_1=H/\left<\left<\mathcal{R}\right>\right>$ satisfies a $C(\mu_1,\epsilon_1,\rho_1)$ small cancellation condition with $\epsilon_1 > 10^6, \mu_1 \leq 0.01$.
 \item \label{lemma_KstaysQuasiConvex_free_part3} For any simple geodesic $n$-gon, $n \geq 3$, $T=pa_1\dots a_{n-1}$ in $H$ with $p$ in $Y$, we have $|p| \leq \mu \cdot (\sum_{i=1}^{n-1} |a_i|)$ if $\sum_{i=1}^{n-1} |a_i| > \rho$. 
\end{enumerate}
Then 

\begin{enumerate}[(A)]
\item the group $H_1 = \left<H|R\right>$ is hyperbolic and satisfies a $C(\mu_1, \epsilon_1, \rho_1)$ condition,
 \item the image of $Y$ is a convex subset of $H_1$ that has again a unique geodesic between any two points on it, 
 \item  if $T=p \cdot a_1 \dots a_{n-1}$ is a simple geodesic $n$-gon in $H_1$ but not in $H$ with $p$ in the image of $Y$, then $l(p) \leq \mu_1 \cdot \sum_{i=1}^{n-1}a_i$. In particular, the injectivity radius implies that no such $n$-gon exists in $H_1$ but not in $H$ unless $\sum_{i=1}^{n-1}a_i > \rho$.
\end{enumerate}
\end{lemma}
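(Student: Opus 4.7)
The overall strategy is to work with minimal disc diagrams over the presentation $H_1 = \langle H \mid \mathcal{R}\rangle$ and to combine the structural results Lemmas~\ref{lemma_44Lacunary} and~\ref{lemma_46Lacunary} with the two geometric restrictions from the hypotheses: condition (a), which forbids large contiguity between any $\mathcal{R}$-cell and $Y$, and condition (c), which already supplies the desired $n$-gon bound inside the ambient hyperbolic group $H$. Conclusion (A) is immediate from Lemma~\ref{lemma_46Lacunary}(2): condition (b) is precisely the $C(\epsilon_1,\mu_1,\rho_1)$ assumption required, and hyperbolicity of $H$ is given.

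For (B), let $u,v$ lie in the image of $Y$ and let $p$ be the unique $Y$-geodesic between their preimages. I would argue by contradiction: if $p$ is not geodesic in $H_1$, fix a strictly shorter $H_1$-geodesic $q$ between $u$ and $v$ and take a minimal disc diagram $\Delta$ in $H_1$ bounding the bigon $p\cdot q^{-1}$. Since $p$ is geodesic in $H$, the bigon is nontrivial in $H$, so $\Delta$ contains at least one $\mathcal{R}$-cell. Lemma~\ref{lemma_46Lacunary}(1) with $t=2$ produces a cell $\Pi$ and disjoint contiguity subdiagrams $\Gamma_1,\Gamma_2$ to $p,q$ with $(\Pi,\Gamma_1,p)+(\Pi,\Gamma_2,q)>1-23\mu_1$. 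Lemma~\ref{lemma_44Lacunary} applied to the geodesic $q$ bounds $(\Pi,\Gamma_2,q)\le 1/2+2\epsilon_1/|\partial\Pi|$, and since $|\partial\Pi|\ge\rho_1>10^6\epsilon_1/\mu_1$ and $\mu_1\le 0.01$, the share $(\Pi,\Gamma_1,p)$ is comfortably above $1/8$. The subdiagram $\Gamma_1$ then exhibits a subarc of $\partial\Pi$ of length exceeding $|\partial\Pi|/8$ running within $\epsilon_1+O(\delta)$ of an arc of $p\subset Y$; by the hyperbolic thin-rectangle property and closest-point projection onto $Y$, this produces a connected portion of a translate of the fixed geodesic representative of $\Pi$'s label effectively sitting in $Y$ with length exceeding $|\partial\Pi|/8$, contradicting condition (a). Uniqueness of the geodesic follows by running the same bigon argument against any second candidate geodesic with endpoints in the image of $Y$.

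For (C), let $T=p\cdot a_1\cdots a_{n-1}$ be a simple geodesic $n$-gon in $H_1$ not realised in $H$, and let $\Delta$ be a minimal diagram for $T$ in $H_1$. The diagram must contain at least one $\mathcal{R}$-cell, and Lemma~\ref{lemma_46Lacunary}(1) with $t=n$ supplies a cell $\Pi$ with contiguity subdiagrams $\Gamma_0,\Gamma_1,\ldots,\Gamma_{n-1}$ to $p,a_1,\ldots,a_{n-1}$ of total ratio above $1-23\mu_1$. Exactly as in (B), condition (a) bounds $(\Pi,\Gamma_0,p)\le 1/8+o(1)$, so the bulk of $\partial\Pi$ must contiguate to the $a_i$'s. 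I would then proceed by induction on the number of $\mathcal{R}$-cells in $\Delta$: peel $\Pi$ off by cutting along $\Gamma_0,\ldots,\Gamma_{n-1}$ and replacing the contiguous arc of each $a_i$ by the matching arc of $\partial\Pi$ together with the two $\epsilon_1$-side paths of $\Gamma_i$. This produces a smaller $H_1$-diagram bounded by an $n'$-gon whose $p$-side is extended by at most two $\epsilon_1$-paths of $\Gamma_0$ (and so still lies in the image of $Y$ by the convexity from (B)), while each peeling contributes at most $|\partial\Pi|/8$ to the $p$-side and more than $(7/8)|\partial\Pi|$ to the $a_i$-sides. When no $\mathcal{R}$-cells remain, the resulting polygon lies inside $H$ with $p$-side in $Y$, so condition (c) yields $|p|\le\mu\sum_j|\tilde a_j|$. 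Careful bookkeeping of the per-cell $1/8$-versus-$7/8$ split, combined with $23\mu_1<1/8$ and the gap between the $\mu$ of (c) and $\mu_1$, should upgrade this to $|p|\le\mu_1\sum_i|a_i|$. Finally, the bound $\sum|a_i|>\rho$ required to invoke (c) is automatic: otherwise the whole polygon fits inside the injectivity radius $r_S(H\to H_1)$, so $T$ would already be realised in $H$, contradicting the hypothesis.

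The main obstacle will be the bookkeeping in (C): keeping the intermediate polygons simple after each peeling, ensuring the $p$-side stays inside the image of $Y$, and tallying the $\epsilon_1$-jumps plus the at most $|\partial\Pi|/8$ per-cell $p$-contributions across all removed $\mathcal{R}$-cells carefully enough to promote the $H$-constant $\mu$ from (c) into the cleaner $H_1$-constant $\mu_1$ measured against the original $\sum|a_i|$ rather than a degraded sum. Parts (A) and (B) are more directly handled by the two cited lemmas together with condition (a), and the hyperbolic thin-rectangle estimate needed to transfer $\epsilon_1$-contiguity into a genuine in-$Y$ subarc should be uniform in $\delta$ since $\delta\le\delta_0\ll\rho_1$.
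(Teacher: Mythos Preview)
Your treatment of (A) and (B) matches the paper's. For (B) you run the contradiction in the opposite direction---bounding the contiguity to the $H_1$-geodesic via Lemma~\ref{lemma_44Lacunary} and then contradicting (a)---whereas the paper bounds the contiguity to $Y$ via (a) and then contradicts Lemma~\ref{lemma_44Lacunary}. Both versions work and are essentially equivalent.

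Your approach to (C), however, has a structural gap, and it is not just bookkeeping. The inductive peeling breaks the hypotheses you need at each step. Once you cut along the contiguity subdiagrams of $\Pi$ and replace subarcs of the $a_i$ by pieces of $\partial\Pi$ together with the $\epsilon_1$-side paths, the new boundary arcs are no longer geodesics in $H$; Lemma~\ref{lemma_46Lacunary}(1) therefore cannot be reapplied to locate the next cell, and condition~(c) cannot be invoked on the terminal polygon either, since (c) is stated only for \emph{geodesic} $n$-gons. Separately, the $\epsilon_1$-side paths of $\Gamma_0$ run from $p$ to $\partial\Pi$ and do not lie in $Y$, so your assertion that the enlarged $p$-side ``still lies in the image of $Y$'' is false, and the $1/8$-bound from (a) is not available for later cells.

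The paper avoids iteration entirely. It takes the single cell $\Pi$ guaranteed by Lemma~\ref{lemma_46Lacunary}, cuts along its contiguity subdiagrams, and decomposes $p$ as $p'+p^{*}+p''$ (the middle piece facing $\Pi$ and the two corner pieces). The crucial step is a claim that each corner pentagon contains \emph{no further} $\mathcal{R}$-cell: if it did, the new cell would have to share a long arc with $\partial\Pi$ (forbidden by the $C(\epsilon_1,\mu_1,\rho_1)$ condition) or with the geodesic side $a$ or $b$ (forbidden by Lemma~\ref{lemma_44Lacunary}). Once the corners are cell-free they are honest diagrams over $H$ bounded by genuine $H$-geodesics (subarcs of $a,b,p$, the $\epsilon_1$-sides, and a subarc of $\partial\Pi$), so condition~(c) applies directly to each corner and to the central rectangle, yielding $|p^{*}|,|p'|,|p''|$ all of order $\mu\epsilon_1$ or $\mu\mu_1\rho_1$, hence $|p|<\mu_1(|a|+|b|)$. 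The missing idea in your plan is precisely this ``no second cell in the corners'' claim, which lets one stop after one step rather than iterate.
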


\begin{proof}
\begin{enumerate}[(A)]

\item Follows from Lemma \ref{lem:graded}.

\item Assume there is a new geodesic $g$ connecting points $x,y$ in $Y$, then $g$ was already a geodesic segment in $H$. Assume that in $H$, the points $x,y$ are connected by a geodesic $q$ which lies on $Y$ and is unique by assumption. Let $R$ be a relation in $\mathcal{R}$. This geodesic bigon spanned by $g,q$ in $H_1$ must contain an $R$-cell $\Pi$ (Figure \ref{fig:geodBigon}). By Lemma \ref{lemma_46Lacunary} there are $\epsilon_1$-subcontiguity diagrams and an inequality \[(\Pi, g, q_g) + (\Pi, q, q_q) > 1-23\mu_1.\]

\begin{figure}
\begin{center}
\labellist
\pinlabel $q$ at 80 140 
\pinlabel $g$ at 50 20
\pinlabel $q_q$ at 140 100
\pinlabel $q_g$ at 140 40
\pinlabel $\Pi$ at 170 70
\endlabellist
\includegraphics[scale=0.6]{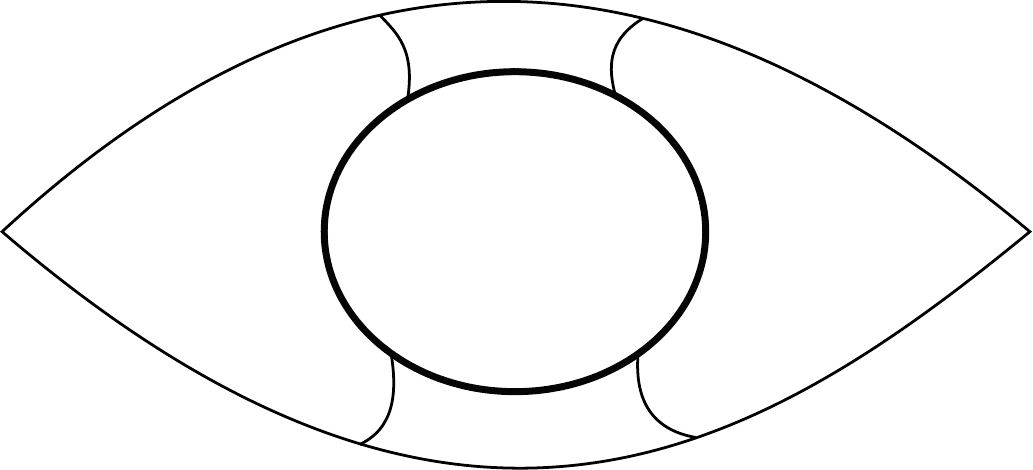}\caption{Geodesic Bigon.}\label{fig:geodBigon}
\end{center}
\end{figure}

 By assumption, the geodesic $q$ lies in $Y$, hence the intersection of $q$ with $R$ in $H$ has length shorter than $l(R)/8$ by assumption \eqref{lemma_KstaysQuasiConvex_free_partII}. This implies $(\Pi, q, q_q)\leq \frac18 + 2 \frac{\epsilon_1}{\rho_1}$, so \[(\Pi, g, q_g) > 1-23\mu_1-\frac18 - 2 \frac{\epsilon_1}{\rho_1} > 1-0.23 - \frac18 - 2\cdot 10^{-8} \geq \frac12 + 2\cdot 10^{-8} \geq  \frac12 + 2\frac{\epsilon_1}{l(R)}\] because $10^{-8} > \epsilon_1/l(R)$ by Proposition \ref{prop_inequalities}. This contradicts the fact that the new path is geodesic by Lemma \ref{lemma_44Lacunary}. This also implies that the image of $Y$ embeds convexly and has unique geodesics between any two points on it.
\item We discuss the case of a triangle, the general case for an $n$-gon then follows from the triangle inequality. Such a triangle must contain an $R$-cell $\Pi$, otherwise it would already be a triangle in $H$. 

%
%
All sides are geodesics in $H$, so we get $\epsilon_1$-subcontiguity diagrams $\Delta_1, \Delta_2, \Delta_3$ with $|r_i| < \epsilon_1$ and an inequality by Lemma \ref{lemma_46Lacunary} and depicted in Figure \ref{fig:triangle}: 
\begin{equation}(\Pi, p, q_p)+(\Pi, a, q_a)+(\Pi, b, q_b) > 1-23\mu_1.\label{eq_relation_free}\end{equation}

\begin{figure}[h]
\begin{center}
\labellist 
\pinlabel $a$ at 20 50
\pinlabel $b$ at 180 90
\pinlabel $p$ at 180 20
\pinlabel $\Pi$ at 90 60
\pinlabel $p^*$ at 120 10
\pinlabel \tiny{$q_a$} at 80 90
\pinlabel \tiny{$q_b$} at 130 90
\pinlabel \tiny{$q_p$} at 120 40
\pinlabel \tiny{$r_1$} at 85 20
\pinlabel \tiny{$r_2$} at 55 80
\pinlabel \tiny{$r_3$} at 145 30
\pinlabel $r$ at 65 50
\pinlabel $\Delta_1$ at 70 105
\pinlabel $\Delta_2$ at 140 110
\pinlabel $\Delta_3$ at 120 25
\endlabellist

\includegraphics[scale=1]{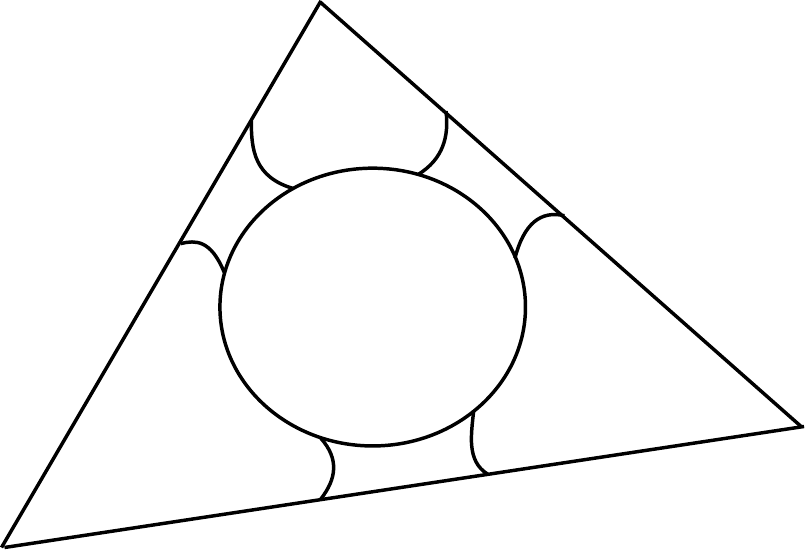}\caption{$\epsilon_1$-contiguity diagrams for a triangle in $H_1$.}\label{fig:triangle}
\end{center}
\end{figure}

By the definition of $\epsilon_1$-contiguity diagrams, none of the $\Delta_i$ contain a further $R$-cell. By hypothesis \eqref{lemma_KstaysQuasiConvex_free_part3}, we get for $p^*$, the segment of $p$ of the adjacent $\epsilon$-contiguity diagram $\Delta_3$ that \begin{equation}p^*<\mu \cdot (r_1+r_3+q_p).\label{eq_pStar}\end{equation} Because the $\epsilon_1$-subcontiguity diagram is a geodesic rectangle, we get further with $\mu \leq 0.01$ that \[q_p \leq r_1+r_3+p^* \leq r_1+r_3+\mu \cdot (r_1+r_3+q_p) \leq 1.01\cdot r_1 + 1.01\cdot r_3 + q_p\cdot \mu\] yielding combined with $|r_i| \leq \epsilon_1$ \[q_p\leq \frac{2.02 \cdot \epsilon_1}{(1-\mu)} \leq \frac{2.02\cdot \epsilon_1}{1-\frac{1}{100}} \leq 3 \epsilon_1\] and so put into \eqref{eq_pStar}  this gives \begin{equation}p^* < \mu \cdot 5 \epsilon_1\label{equation_p_star}\end{equation} which implies by using the triangle inequality in the rectangle $\Delta_3$ that \begin{equation}|q_p| < 7 \mu \epsilon_1.\label{eq:qp}\end{equation}

\medskip

We look at the left corner of this triangle and find a pentagon $P$ with sides $a', r_1, r, r_2, p'$ as shown in Figure~\ref{fig:pentagon}.

\begin{figure}[h]
\begin{center}
\labellist 
\pinlabel $a'$ at 20 50
\pinlabel $\Pi$ at 90 60
\pinlabel \tiny{$r_1$} at 85 20
\pinlabel \tiny{$r_2$} at 55 80
\pinlabel $r$ at 65 50
\pinlabel $p'$ at 70 5
\endlabellist

\includegraphics[scale=1]{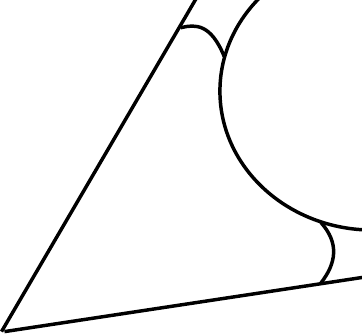}\caption{Corner of the triangle in $H_1$.}\label{fig:pentagon}
\end{center}
\end{figure}

\begin{claim}$P$ does not contain another $R$-cell.\end{claim}
\begin{proof_claim}
Assume it does contain an $R$-cell $\Pi_1$. Then we would have by Lemma \ref{lemma_46Lacunary} for suitable $\epsilon_1$-contiguity diagrams the inequality 
\begin{equation}(\Pi_1, p', q'_p)+(\Pi_1, a', q'_a)+(\Pi_1, r, q_r)+(\Pi_1, r_1, q_{r_1}) + (\Pi_1,r_2, q_{r_2}) > 1-23\mu_1.\label{eq:contPentagon}\end{equation} In particular, there is one $\epsilon_1$-contiguity diagram adjacent to $p'$ with sides $p'^*, r_4, r_5$ and $q_{p'}$. We have from above $|r_i| < \epsilon_1$ and hence $(\Pi_1, r_i,q_{r_i}) < \epsilon_1/\rho_1 <  10^8$. By the small cancellation condition $C(\epsilon_1, \mu_1, \rho_1)$ it is required that $(\Pi_1, r, q_r) < \mu_1$ and by the same argument as above we get $p'^*< \mu \cdot 5 \epsilon_1 < 5 \epsilon_1$. This together would imply with $\epsilon_1/ \rho_1 \leq 10^{-8}$ as in Proposition \ref{prop_inequalities} that \[(\Pi_1, a', q'_a) > 1-23 \mu_1 - 5 \epsilon_1/\rho_1 - 7 \cdot 10^{-8}>\frac23 > \frac12 + 2\epsilon_1\rho_1.\] This contradicts that $a'$ is a geodesic segment by Lemma 4.4 in \cite{osin_lacunary}. So the pentagon $P$ does not contain an $\mathcal{R}$-cell. \end{proof_claim}

\medskip 

We then apply \eqref{lemma_KstaysQuasiConvex_free_part3} to $P$ and get \[p'< \mu (r_1+r+r_2+a').\] The condition $\rho \leq r_1+r+r_2+a'$ follows automatically from the injectivity radius of $H\twoheadrightarrow H_1$. We use that $a' \leq r_2+r+r_1+p'$ by the triangle inequality and hence with $|r_i| \leq \epsilon_1$ 
we get 
\[p' \leq \mu \cdot (2r_1+2r+2r_2 + p') \leq \mu \cdot  ( 4 \epsilon_1 + 2|r| + p')\] leading to \begin{equation}p'\leq \frac{\mu}{1-\mu}(4 \epsilon_1 + 2|r|) \leq 2 \mu \cdot (4 \epsilon_1 + 2 |r|).\label{eq:pdash}\end{equation} On the right side of the triangle in Figure \ref{fig:triangle} we have another pentagon $P'=p''b'\bar{r}r_4r_5$ with side $p''$ part of $p$. The same as above shows 
\begin{equation}p'' \leq 2 \mu \cdot (4 \epsilon_1 + 2 |\bar{r}|).\label{eq:pdashdash}\end{equation}

Equation \eqref{eq:contPentagon} implies $|r| + |\bar{r}|< 23 \mu_1\rho_1$. 

So \[p = p'+p^*+p'' \leq 5 \epsilon_1 + 2\mu \cdot (8 \epsilon_1 + 2(|r|+|\bar{r}|)) \leq 5\epsilon_1 + 16 \mu \epsilon_1 + 2\mu \cdot 46 \mu_1 \rho_1\]
\[\leq \rho_1 (5\cdot 10^{-8} + 16 \cdot 10^{-10} + 92\mu\mu_1) \leq 0.93\mu_1\rho_1.\] 
Now if the triangle $pab$ contains an $R_1$-cell and $|R_1| > \rho_1$ we must have $|a|+|b|+ |p| \geq \rho_1$. With $p < 0.93 \mu_1\rho_1$ we hence must have \[|a| + |b| > \rho_1 - 0.93\mu_1\rho_1 > \rho_1 - 0.0093 \rho_1 > 0.999 \rho_1 > 0.93 \rho_1.\] Hence we obtain $p < \mu_1 \cdot (|a|+|b|)$.
\end{enumerate}
\end{proof}

\subsection{Geometry of graded small cancellation groups} 

Denote by $\mathcal{F}=\lim_{n\rightarrow \infty} F(n)$ the set of all such paths in $F_4$ as described in Lemma \ref{lemma_comb_free}. Denote by $\bar{\mathcal{F}}$ the set of images of those paths in $G=\langle F_4, \mathcal{Q} \rangle$. 



\begin{prop}\label{prop_L}
Let $G=\lim_{i \rightarrow \infty} G_i$ be the group with a graded small cancellation presentation $\left<G | \mathcal{Q} \right>$ with $G_0 = F_4$. Then
\begin{enumerate}[(i)]
 \item $\tilde{\mathcal{F}}$ is a convex subset of $Cay(G)$,
 \item any geodesic triangle $pab$ with $p$ a segment of $\tilde{\mathcal{F}}$ satisfies: if $\rho_{i+1} > |a| + |b| > \rho_i$, then $|p| < \mu_i (|a|+|b|)$.
\end{enumerate}
\end{prop}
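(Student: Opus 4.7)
The plan is to prove (i) and (ii) simultaneously by induction along the quotient chain $F_4 = G_0 \twoheadrightarrow G_1 \twoheadrightarrow G_2 \twoheadrightarrow \cdots$, iterating Lemma~\ref{lemma_KstaysQuasiConvex_free} one step at a time. The inductive statement at stage $i$ asserts that the image $\tilde{\mathcal{F}}_i$ of $\tilde{\mathcal{F}}$ in $G_i$ is convex in $\mathrm{Cay}(G_i)$, has a unique geodesic between any two of its points, and every simple geodesic $n$-gon $p a_1 \ldots a_{n-1}$ in $G_i$ with $p$ on $\tilde{\mathcal{F}}_i$ and $\sum|a_j| > \rho_i$ satisfies $|p| \leq \mu_i \sum|a_j|$. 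The base case $G_0 = F_4$ is immediate: Lemma~\ref{lemma_comb_free} realises $\tilde{\mathcal{F}}$ as an isometrically embedded subtree of $\mathrm{Cay}(F_4)$, so convexity and uniqueness of geodesics are obvious, and a tree contains no simple geodesic $n$-gons at all.

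For the inductive step I would apply Lemma~\ref{lemma_KstaysQuasiConvex_free} with $H = G_i$, $Y = \tilde{\mathcal{F}}_i$, $H_1 = G_{i+1}$, and $\mathcal{R} = \mathcal{R}_{i+1}$. Hypotheses (a) (unique geodesics inside $Y$) and (c) (the $\mu_i,\rho_i$-triangle inequality in $G_i$) are exactly the inductive claim, and the $C(\epsilon_{i+1},\mu_{i+1},\rho_{i+1})$-condition on $\mathcal{R}_{i+1}$ is built into the graded small cancellation presentation. The main obstacle is verifying hypothesis (b): any connected segment of any translate of any geodesic representative of $R \in \mathcal{R}_{i+1}$ lying in $\tilde{\mathcal{F}}_i$ has length less than $|R|/8$. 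This is precisely where the combinatorial work of the preceding subsection becomes essential. By Lemma~\ref{lemma_setN_free} the set $\mathcal{N}$ contains every length-$\lfloor\sqrt{\mu_{i+1}}\sigma_{i+1}\rfloor$ subword of every relation $R \in \mathcal{R}_{i+1}$ with $\rho_{i+1} < |R| < \sigma_{i+1}$, while by Lemma~\ref{lemma_comb_free} no path of $\tilde{\mathcal{F}}$ contains any such subword. Hence any segment of $R$ on $\tilde{\mathcal{F}}$ has length strictly less than $\sqrt{\mu_{i+1}}\sigma_{i+1}$, and the inequalities of Proposition~\ref{prop_inequalities} push this quantity below $|R|/8$. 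A translate of $R$ reduces to the combinatorial case by using the uniqueness of geodesics in $\tilde{\mathcal{F}}_i$ to lift the relevant subsegment back to $\mathrm{Cay}(F_4)$, where the exclusion applies. The conclusions of Lemma~\ref{lemma_KstaysQuasiConvex_free} then provide convexity of $\tilde{\mathcal{F}}_{i+1}$ in $G_{i+1}$, uniqueness of geodesics between its points, and the $\mu_{i+1}$-triangle inequality for $n$-gons newly realised in $G_{i+1}$; combined with the inductive hypothesis (for $n$-gons that were already present in $G_i$) and using $\mu_{i+1} \leq \mu_i$, this closes the inductive claim at stage $i+1$.

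To pass to the limit, observe that convexity and unique geodesics are preserved from each $G_i$ to the next, so $\tilde{\mathcal{F}}$ is convex in $\mathrm{Cay}(G)$, proving (i). For (ii), take a geodesic triangle $pab$ in $G$ with $p$ on $\tilde{\mathcal{F}}$ and $\rho_{i+1} > |a|+|b| > \rho_i$; since $|p| \leq |a|+|b|$, its perimeter is at most $2(|a|+|b|) < 2\rho_{i+1}$. By Lemma~\ref{lemma_46Lacunary}(1), any $\mathcal{R}$-cell $\Pi$ in a minimal filling disc satisfies $(1-23\mu)|\partial\Pi| \leq \mathrm{perimeter}$, so $|\partial\Pi|$ is comparable to $|a|+|b|$; but any relation from $\mathcal{R}_j$ with $j \geq i+2$ has length at least $\rho_{i+2} \gg \rho_{i+1}$ by the rapid growth in Proposition~\ref{prop_inequalities}(ii), and so no such cell can appear. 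Hence the triangle is realised already in $G_{i+1}$, and the inductive claim at stage $i+1$ together with $\mu_{i+1}\leq\mu_i$ yields $|p| < \mu_i(|a|+|b|)$, which is (ii).
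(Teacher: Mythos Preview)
Your argument follows exactly the paper's approach: induct along the chain $G_0\twoheadrightarrow G_1\twoheadrightarrow\cdots$ by repeatedly applying Lemma~\ref{lemma_KstaysQuasiConvex_free}, with the base case trivial in $F_4$ and the short-intersection hypothesis supplied ``by construction'' of $\tilde{\mathcal F}$ via Lemmas~\ref{lemma_comb_free} and~\ref{lemma_setN_free}. You supply considerably more detail than the paper does---in particular the explicit passage to the limit for part~(ii) via the perimeter bound and Lemma~\ref{lemma_46Lacunary} is something the paper leaves entirely implicit.

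Two small points. First, your labelling of the hypotheses of Lemma~\ref{lemma_KstaysQuasiConvex_free} is shifted relative to the paper: what you call hypothesis~(b) (short intersection of relators with $Y$) is the paper's~(a), and the unique-geodesic condition you call~(a) is actually part of the preamble of the lemma, not an enumerated hypothesis. This is cosmetic. Second, and slightly more substantive, you invoke ``$\mu_{i+1}\le\mu_i$'' twice, but the graded small cancellation axioms only require $\mu_n=o(1)$ and $\mu_n\le\alpha$, not monotonicity. In the final paragraph this is harmless (you only need the weaker $\mu_{i+1}\le\alpha$ to get a uniform bound, and in any case the triangle with $|a|+|b|<\rho_{i+1}$ first appears in some $G_j$ with $j\le i+1$, giving $|p|\le\mu_j(|a|+|b|)$). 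But in the inductive step it means your stated inductive claim does not literally close: an $n$-gon already present in $G_i$ with $\sum|a_j|>\rho_{i+1}$ only satisfies $|p|\le\mu_i\sum|a_j|$, not $|p|\le\mu_{i+1}\sum|a_j|$. The fix is to phrase the inductive hypothesis as ``$|p|\le\mu_j\sum|a_k|$ where $j$ is the first stage at which the $n$-gon appears,'' which is exactly what conclusion~(C) of the lemma delivers and what part~(ii) of the proposition actually asserts. The paper's own proof is terse enough that it does not confront this point either.
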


\begin{proof}
We show by induction that $M$ satisfies the conditions \eqref{lemma_KstaysQuasiConvex_free_partII}-\eqref{lemma_KstaysQuasiConvex_free_part3} of Lemma \ref{lemma_KstaysQuasiConvex_free}. For $i=0$ we have $H_0=F_4$ and hence nothing to show. Now assume $\tilde{\mathcal{F}}$ satisfies these conditions in $H_i$. Then by Lemma \ref{lemma_KstaysQuasiConvex_free} it satisfies (b) and (c) in $H_{i+1}$. By construction of the set $\tilde{\mathcal{F}}$ it also satisfies (a), and hence (a)-(c) in $H_{i+1}$ and hence (i) and (ii).
\end{proof}

\begin{lemma}\label{lem:nohexagons}
Let $A$ be a simple geodesic hexagon in $G$ with $p$ a side that is a segment of $\tilde{\mathcal{F}}$. Further, assume that we have three short sides $s_1, s_2, s_3$ with $|s_i| < |p|/4$ for all $i=1,2,3$. Then $|s_i|=0$ for all $i=1,2,3$, i.e.\;the hexagon is a triangle, and if the other two sides $a,b$ satisfy $|a|+|b| > \rho_i$ then $|p|< \mu_i \cdot  (|a|+|b|)$.
\end{lemma}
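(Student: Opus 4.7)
The plan is to triangulate the hexagon from an endpoint of $p$ and apply Proposition~\ref{prop_L}(ii) to the unique subtriangle carrying $p$. Label the vertices of $A$ cyclically as $v_0, v_1, \dots, v_5$ with $p$ running from $v_0$ to $v_1$, and let $x_1, \dots, x_5$ be the remaining sides, so $\{x_1,\dots,x_5\} = \{a, b, s_1, s_2, s_3\}$ as a multiset. Draw geodesic diagonals $d_j$ from $v_0$ to $v_j$ for $j = 2, 3, 4$; this cuts $A$ into four subtriangles $T_1 = v_0 v_1 v_2,\; T_2 = v_0 v_2 v_3,\; T_3 = v_0 v_3 v_4,\; T_4 = v_0 v_4 v_5$.

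Only $T_1$ carries $p$ as a side. Applying Proposition~\ref{prop_L}(ii) to $T_1$ gives $|p| < \mu_i(|x_1| + |d_2|)$ under the length hypothesis, and iterating the triangle inequality across $T_2, T_3, T_4$ yields $|d_2| \leq |x_2| + |x_3| + |x_4| + |x_5|$, so
\[ |p| < \mu_i(|a| + |b| + |s_1| + |s_2| + |s_3|). \]
The short-side hypothesis $|s_j| < |p|/4$ then absorbs the $s_j$-contribution: $(1 - 3\mu_i/4)\,|p| < \mu_i(|a|+|b|)$, and since $\mu_i \leq 0.01$ this is essentially the claimed triangle bound $|p| < \mu_i(|a|+|b|)$.

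To upgrade this to the rigid statement $|s_j| = 0$ for each $j$, I would argue by contradiction. Assume some $|s_j| > 0$ and take a minimal disc diagram $\Delta$ for $A$ over the presentation of $G$. Lemma~\ref{lemma_46Lacunary}(1) applied with $t = 6 \leq 12$ produces an $\mathcal{R}$-cell $\Pi$ together with $\epsilon$-contiguity subdiagrams whose ratios to the six sides sum to more than $1 - 23\mu$. Hypothesis~(a) of Lemma~\ref{lemma_KstaysQuasiConvex_free} bounds the contiguity to $p$ by $1/8 + 2\epsilon/|\partial\Pi|$, and the inequality $|s_j| < |p|/4$ (together with the fact that $|p|$ is comparable to $|\partial\Pi|$ in the relevant regime) bounds each $(\Pi, s_j, q_{s_j})$. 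Combining these with the uniqueness of geodesics between points of $\tilde{\mathcal{F}}$ (Lemma~\ref{lemma_KstaysQuasiConvex_free}(B)) and a pentagon estimate of the type~\eqref{eq:pdash} used in the proof of Lemma~\ref{lemma_KstaysQuasiConvex_free}, the contiguity ratio to $a$ or $b$ would exceed $1/2 + 2\epsilon/|\partial\Pi|$, contradicting Lemma~\ref{lemma_44Lacunary} since $a$ and $b$ are geodesic. Hence no $\mathcal{R}$-cell of $\Delta$ can be adjacent to a nontrivial $s_j$, forcing $|s_j| = 0$ and reducing $A$ to a triangle of sides $p, a, b$, to which Proposition~\ref{prop_L}(ii) delivers the stated bound directly.

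The main obstacle is the last paragraph: perimeter control alone does not force $|s_j| = 0$, so the rigidity really has to come from combining $\tilde{\mathcal{F}}$-convexity with the diagrammatic bookkeeping of Lemma~\ref{lemma_46Lacunary}. In particular, the case where $|p|$ is not comparable to $|\partial\Pi|$ will likely require an auxiliary induction on the number of cells of $\Delta$, or on the level $i$ at which the hexagon first becomes realizable, so as to reduce to the situation where an $\mathcal{R}$-cell genuinely straddles the short side.
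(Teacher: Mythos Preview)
Your first paragraph already contains the triangle the paper uses, but you then bound the wrong side of it. In the paper's proof, one simply takes a short side $s_1$ adjacent to $p$ (in the intended application the hexagon approximates a triangle in the cone, so the short sides sit at the three corners and at least one is adjacent to $p$), and forms the triangle $T$ with sides $p$, $s_1$, and the diagonal $k$ joining their far endpoints---exactly your $T_1$ when $x_1$ is short. Proposition~\ref{prop_L}(ii) gives $|p| < \mu_i(|s_1| + |k|)$. The point is to bound $|k|$ the \emph{short} way around the triangle, $|k| \le |s_1| + |p|$, not the long way through $x_2,\dots,x_5$ as you do. This yields
\[
(1-\mu_i)\,|p| \;<\; 2\mu_i\,|s_1| \;<\; 2\mu_i\cdot\frac{|p|}{4},
\]
hence $2(1-\mu_i) < \mu_i$, which is absurd for $\mu_i \le 0.01$. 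So $|s_1|=0$; repeating for the other short sides collapses the hexagon to a triangle, and Proposition~\ref{prop_L}(ii) gives the final bound directly.

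In other words, the rigidity $|s_j|=0$ falls out of the same one-line perimeter trick you used for the inequality, just applied to a smaller triangle; no disc diagrams, no Lemma~\ref{lemma_46Lacunary}, no induction on cells is needed. Your long-way bound $|d_2|\le |x_2|+\cdots+|x_5|$ throws away exactly the leverage that makes the contradiction work, which is why your second and third paragraphs end up chasing a much harder route.
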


\begin{proof}
Assume $|s_i|>0$. Let $s_1$ be adjacent to $p$. Then there exists a triangle $s_1pk$. Since $G$ is a direct limit of groups $G_i$, this triangle must be a triangle in some $G_i$ but not in $G_{i-1}$. Hence by Proposition \ref{prop_L} we obtain \[p < \mu_i \cdot (s_1+k) < \mu_i \cdot (s_1+s_1+p) = 2 \mu_i s_1 + \mu_i p.\]
This leads to 
\[p < \frac{2}{1-\mu_i} s_1 < \frac{p}{2(1-\mu_i)}\]
\[2(1-\mu_i) < 1,\] a contradiction since $\mu_i \leq 0.01$.
\end{proof}

\begin{cor}\label{cor:triangle}
Let $T=abp$ be a simple geodesic triangle in $G$ with $p \subset \mathcal{F}$ and $\rho_i < |a|+|b|$. Then there exists a point $x$ on $T$ with $d(x,p) \geq \frac{|p|}{2\pi \mu_i}$.
\end{cor}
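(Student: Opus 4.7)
The plan is to take $x$ to be the vertex $C$ of $T$ opposite to $p$ (the point where the geodesic sides $a$ and $b$ meet) and show that $C$ itself realises the required lower bound. The argument combines a routine triangle-inequality estimate with the contiguity bound supplied by Proposition \ref{prop_L}; no geometric analysis of the $R$-cell filling $T$ is needed.

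First, I would label the endpoints of $p$ as $A,B$, with $a$ running from $A$ to $C$ and $b$ from $B$ to $C$. Since $a,b,p$ are geodesics in $G$, $d(A,C)=|a|$, $d(B,C)=|b|$, and any $y\in p$ with $d_p(A,y)=t$ satisfies $d(A,y)=t$, $d(B,y)=|p|-t$. Applying the triangle inequality to the broken paths $A\to y\to C$ and $B\to y\to C$ gives $d(y,C)\geq |a|-t$ and $d(y,C)\geq |b|-(|p|-t)$; averaging removes $t$ and yields $d(y,C)\geq (|a|+|b|-|p|)/2$ uniformly in $y$, so $d(C,p)\geq (|a|+|b|-|p|)/2$.

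Next, I would choose $j\geq i$ with $\rho_j<|a|+|b|\leq \rho_{j+1}$ (which exists because $\rho_n\to\infty$ and $|a|+|b|>\rho_i$) and invoke Proposition \ref{prop_L}(ii) to obtain $|p|<\mu_j(|a|+|b|)$. This gives $|a|+|b|-|p|>|p|(1-\mu_j)/\mu_j$, so
\[ d(C,p)\;>\;\frac{|p|\,(1-\mu_j)}{2\mu_j}. \]
Using that $(\mu_n)$ is non-increasing so that $\mu_j\leq\mu_i$, together with $\mu_j\leq 0.01$, the factor $(1-\mu_j)/\mu_j \geq 0.99/\mu_j\geq 0.99/\mu_i$ is strictly larger than $1/(\pi\mu_i)$ since $0.99\pi>1$. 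This yields $d(C,p)\geq |p|/(2\pi\mu_i)$, and $x=C$ is the required point.

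The main thing to flag, rather than an obstacle, is that this argument gives much more than the stated bound: an estimate of order $|p|/(2\mu_i)$ rather than $|p|/(2\pi\mu_i)$. The factor $2\pi$ in the statement appears to be chosen to foreshadow the asymptotic-cone picture developed in the next subsection, where a relation of length $L$ limits to a circle of circumference $L$ and hence diameter $L/\pi$, rather than to reflect the sharp estimate.
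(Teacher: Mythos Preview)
Your argument is correct and in fact more rigorous than the paper's. The paper's proof is heuristic: it ``circumscribes a circle'' around the triangle, invokes the Euclidean relation $C=\pi d$ between circumference and diameter, and reads off the radius as a lower bound for $d(x,p)$. None of this is literally justified in the Cayley graph of $G$; the factor $\pi$ in the statement is an artifact of this Euclidean picture. You sidestep the issue entirely by taking $x$ to be the apex $C$ and using only the triangle inequality to obtain $d(C,p)\geq (|a|+|b|-|p|)/2$, which together with Proposition~\ref{prop_L} gives the stronger bound $d(C,p)>|p|(1-\mu_j)/(2\mu_j)$. Your closing remark that the $2\pi$ is cosmetic is exactly right.

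One small caveat: the assertion that $(\mu_n)$ is non-increasing is not among the listed axioms $(Q_0)$--$(Q_3)$; only $\mu_n=o(1)$ and $\mu_n\leq 0.01$ are required. The paper's own proof is equally loose here, applying Proposition~\ref{prop_L} directly with the given index $i$ even though that proposition needs $\rho_i<|a|+|b|<\rho_{i+1}$. Since the sole use of the corollary (Proposition~\ref{prop:nodoublelines}) only needs $d(x,p)/|p|\to\infty$, which follows from $\mu_j\to 0$ for your correctly chosen index $j$, the gap is harmless for the paper's purposes; but the cleanest fix is either to state the corollary with the index $j$ determined by $\rho_j<|a|+|b|\leq\rho_{j+1}$, or to add monotonicity of $(\mu_n)$ as a standing hypothesis.
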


\begin{proof}
We subscribe a circle around the triangle. Then by Proposition \ref{prop_L} the circumference $C$ must be at least $C > |p|/\mu_i$ since $|a|+ |b| > |p|/\mu_i$. The side $p$ has length at most the diameter $d$ of the surrounding triangle. Hence we have with $C = \pi \cdot d$ that $\pi \cdot d > |p|/\mu_i$. The point $x$ on the triangle which is the furthest from $p$ has distance at least the radius of the circle and hence $d(x,p) > |p|/(2\pi \mu_i)$.
\end{proof}

\begin{prop}\label{prop:nodoublelines}
Fix $\{d_i\}, \omega$ and hence an asymptotic cone. Let $q_i$ be a sequence of geodesic segments such that $\gamma= \lim_\omega q_i$ is contained in the ultralimit $\bar{\mathcal{F}}$ of $\tilde{\mathcal{F}}$. Then either $\gamma$ is a point or all but finitely many  $q_i$ are contained in $\tilde{\mathcal{F}}$.
\end{prop}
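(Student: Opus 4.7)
\textbf{Proof plan for Proposition \ref{prop:nodoublelines}.} I argue by contradiction. Suppose $\gamma$ is not a point, so $|q_i| = \Theta_\omega(d_i)$ and in particular $|q_i| \to \infty$ along $\omega$, yet assume $q_i \not\subset \tilde{\mathcal{F}}$ for $\omega$-many $i$. The idea is to produce, for each such $i$, a ``parallel'' geodesic $p_i$ lying inside $\tilde{\mathcal{F}}$ whose length is essentially equal to $|q_i|$, and then apply Proposition \ref{prop_L}(ii) to a triangle built from $p_i$ and $q_i$ to force $|p_i|$ to be negligible compared to $|q_i|$, which is absurd.

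First I would construct $p_i$. Since the endpoints $a_i, b_i$ of $q_i$ converge in the cone to points of $\gamma \subset \bar{\mathcal{F}}$, choose $a'_i, b'_i \in \tilde{\mathcal{F}}$ with $d(a_i, a'_i), d(b_i, b'_i) = o_\omega(d_i)$. By Proposition \ref{prop_L}(i) and the unique-geodesic part of Lemma \ref{lemma_KstaysQuasiConvex_free}(B), there is a unique geodesic $p_i$ in $G$ between $a'_i$ and $b'_i$, and it lies in $\tilde{\mathcal{F}}$. (Note in passing: if both $a_i, b_i \in \tilde{\mathcal{F}}$, unique-geodesic would already force $q_i \subset \tilde{\mathcal{F}}$, contrary to assumption, so at least one endpoint of $q_i$ lies off $\tilde{\mathcal{F}}$; this is not logically necessary for what follows.)

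Now pick a midpoint $c_i$ of $q_i$ and form the geodesic triangle $\Delta_i$ with vertices $a'_i, b'_i, c_i$ and sides $p_i$, $u_i = [a'_i, c_i]$, $v_i = [c_i, b'_i]$. Two triangle-inequality estimates give
\[
|u_i| + |v_i| \;\leq\; |q_i| + o_\omega(d_i), \qquad |p_i| \;\geq\; |q_i| - o_\omega(d_i),
\]
so $|p_i|/(|u_i| + |v_i|) \to 1$ along $\omega$. Since $|u_i| + |v_i| = \Theta_\omega(d_i) \to \infty$, there exist indices $j_i \to \infty$ with $\rho_{j_i} < |u_i| + |v_i| \leq \rho_{j_i+1}$; Proposition \ref{prop_L}(ii) applied to $\Delta_i$ then yields $|p_i| \leq \mu_{j_i}(|u_i| + |v_i|)$, so the same ratio is bounded by $\mu_{j_i} \to 0$ (Proposition \ref{prop_inequalities}(v)). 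The contradiction finishes the argument.

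The main technical obstacle is that Proposition \ref{prop_L}(ii) ultimately relies on simplicity of the $n$-gon (via Lemma \ref{lemma_KstaysQuasiConvex_free}(c)), so I would need to verify that $\Delta_i$ can be taken simple. If two of the sides of $\Delta_i$ share a subsegment, that subsegment lies on a geodesic of $\tilde{\mathcal{F}}$, and sliding the offending vertex ($a'_i$, $b'_i$, or $c_i$) inward along $\tilde{\mathcal{F}}$ produces a strictly smaller simple triangle of the same shape. The shortening amount is controlled by the distances $d(a_i, a'_i), d(b_i, b'_i)$ and hence is $o_\omega(d_i)$, so the two estimates above survive verbatim (up to absorbing another $o_\omega(d_i)$) and the contradiction persists. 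Beyond this minor bookkeeping, the argument is a direct application of the triangle-attachment principle of Proposition \ref{prop_L}(ii), exactly in the spirit of Lemma \ref{lem:nohexagons}.
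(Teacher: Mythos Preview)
Your argument follows the same route as the paper's: build a geodesic triangle with base $p_i\subset\tilde{\mathcal{F}}$ and apex a point of $q_i$, and invoke Proposition~\ref{prop_L}(ii) to contradict the estimate $|p_i|/(|u_i|+|v_i|)\to 1$. The paper packages the same contradiction through Corollary~\ref{cor:triangle} (some point of the triangle must sit at distance $\gtrsim |p|/\mu$ from the base, while everything lies within $o_\omega(d_i)$ of $\tilde{\mathcal{F}}$) and chooses as apex the point of $q_i$ farthest from $\tilde{\mathcal{F}}$ rather than the midpoint, but these are cosmetic differences; both reduce to the inequality of Proposition~\ref{prop_L}(ii).

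One correction to your simplicity bookkeeping: the amount trimmed when passing to a simple triangle is \emph{not} controlled by $d(a_i,a'_i)$ and $d(b_i,b'_i)$ --- the geodesic $u_i=[a'_i,c_i]$ may share a long initial segment with $p_i$ even when $a_i=a'_i$. What does survive the trimming at the base vertices is the equality $(|u'_i|+|v'_i|)-|p'_i|=(|u_i|+|v_i|)-|p_i|=o_\omega(d_i)$; combined with $|p'_i|\le\mu_j(|u'_i|+|v'_i|)$ and $\mu_j\le 0.01$ this still forces $(1-\mu_j)(|u'_i|+|v'_i|)=o_\omega(d_i)$, so either the simplified triangle has size $\Theta_\omega(d_i)$ and your ratio contradiction goes through verbatim, or the apex is already within $o_\omega(d_i)$ of $\tilde{\mathcal{F}}$. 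Choosing the apex off $\tilde{\mathcal{F}}$, as the paper does, at least guarantees nondegeneracy; the paper's proof is equally terse on the remaining details.
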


We recall a notation used in the following proofs: Let $x,y$ be points in a geodesic metric space. Then by $[x,y]$ we denote a geodesic segment starting at $x$ and ending at $y$. In the case of multiple geodesic segments between these two points, we mean any of these geodesic segments, not to be specified.

\begin{proof}
Assume the limit of the $q_i$ is a geodesic segment $\tilde{q}$ inside the limit $\bar{\mathcal{F}}$ of $\tilde{\mathcal{F}}$. If not all $q_i$ lie on $\tilde{\mathcal{F}}$, then there exists an index $i$ such that $q_i$ is not fully contained in $\tilde{\mathcal{F}}$ in $Cay(G)$. Because the injectivity radius of the maps $G_i \twoheadrightarrow G_{i+1}$ goes to infinity we can assume we are in the group $H_i$. Let $q$ be a geodesic segment on $\mathcal{F}$ such that the limit of $q$ is also $\tilde{q}$. Let $x$ be a point on $q_i$ such that $x \notin \tilde{\mathcal{F}}$ with maximal distance to $\tilde{\mathcal{F}}$. Then there exists a geodesic triangle $[q_-,x], [x,q_+], [q_+, q_-]$. Now by Corollary \ref{cor:triangle} the point $x$ must be at distance at least \[d(x,[q_+,q_-]) \geq \frac{|[q_+,q_-]|}{2\pi \mu_i}.\] Together with $lim_{i \rightarrow \infty} \mu_i = 0$ we see that there cannot be infinitely many $q_i$ which lie outside $\tilde{\mathcal{F}}$ but the limit $\lim_\omega q_i$ is inside the limit of $\tilde{\mathcal{F}}$.
\end{proof}

\begin{theorem}\label{thm:morse}
Let $G = \lim_{i \rightarrow \infty} G_i$ be a group which satisfies a graded small cancellation condition with $G_0 = F_4$. Then $G$ contains an isometrically embedded $7$-regular tree $T$, where every bi-infinite path is a Morse geodesic.
\end{theorem}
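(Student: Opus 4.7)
The strategy is to take $T := \tilde{\mathcal{F}}$, the image in $Cay(G)$ of the $7$-regular tree of valid words $\mathcal{F}$ constructed in Lemma \ref{lemma_comb_free}. The isometric embedding of $T$ as a $7$-regular tree follows directly from Proposition \ref{prop_L}(i): $\tilde{\mathcal{F}}$ is convex in $Cay(G)$ with unique geodesics between any two of its points, and those geodesics coincide with the unique simple paths given by the tree structure of $\mathcal{F}$. In particular, every bi-infinite simple path $\gamma$ on $T$ is a geodesic in $Cay(G)$; the real work is to prove that it is Morse.

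To do so I would invoke Proposition \ref{prop_transversal}: it suffices to show that in every asymptotic cone $Con^{\omega}(G,e,d)$, the ultralimit $\bar{\gamma}$ is either empty or contained in a transversal tree of the tree-graded structure. By Theorem \ref{thm_gsc_ac}, this cone is a circle-tree whose pieces are circles arising as ultralimits of (rescaled) loops labelled by relations in $\mathcal{Q}$, and a transversal tree is characterised by meeting each such circle in at most one point. So assume for contradiction that $\bar{\gamma}$ shares two distinct points $x,y$ with some circle $C$.

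The sub-arc $\sigma$ of $\bar{\gamma}$ from $x$ to $y$ is a geodesic in the cone. Standard arguments for tree-graded spaces (a simple geodesic bigon collapses into a single piece, hence any geodesic between two points of a piece remains in that piece) force $\sigma$ to lie on $C$. Thus $\sigma$ is a non-trivial geodesic segment of $\bar{\gamma}$ contained in $\bar{\mathcal{F}} \cap C$. Approximating $\sigma$ by a sequence of geodesic sub-segments $q_n \subseteq \gamma \subseteq \tilde{\mathcal{F}}$ of $Cay(G)$ with $|q_n|=\Theta_{\omega}(d_n)$, Proposition \ref{prop:nodoublelines} confirms that $q_n$ lies on $\tilde{\mathcal{F}}$. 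On the other hand, $C$ arises from a sequence of relations $R_n \in \mathcal{R}_{k_n}$ with $|R_n|=\Theta_{\omega}(d_n)$, so $q_n$ is forced to closely fellow-travel an arc of (a translate of) $R_n$ of comparable length. Feeding this into Lemma \ref{lemma_46Lacunary} should yield a common subword of $q_n$ and $R_n$ of length much larger than $\sqrt{\mu_{k_n}}\sigma_{k_n}$, directly contradicting the defining property of $\tilde{\mathcal{F}}$ given by Lemma \ref{lemma_comb_free} applied to the set $\mathcal{N}$ of Lemma \ref{lemma_setN_free}.

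The main obstacle will be converting the proximity of $q_n$ to an arc of $R_n$ in the rescaled metric into an honest long common subword. Geodesics approximating $\sigma$ in $Cay(G)$ need not coincide literally with arcs of $R_n$; they only lie at distance $o_{\omega}(d_n)$ from them. To extract a subword of length $\Theta(\sigma_{k_n})$ in common one will need to combine the hyperbolicity of $G_{k_n}$, the $\epsilon_{k_n}$-piece bounds of the small cancellation condition, and the geometric control on triangles with a side in $\tilde{\mathcal{F}}$ provided by Proposition \ref{prop_L}(ii) together with Lemma \ref{lem:nohexagons} and Corollary \ref{cor:triangle}, so as to force the quasi-fellow-travelling geodesics to share an $\epsilon_{k_n}$-piece of a length explicitly forbidden by the construction of $\mathcal{N}$.
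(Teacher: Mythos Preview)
Your overall plan is sound and uses the same ingredients as the paper, but you choose a different endgame and the gap you flag is a real one that the paper simply sidesteps.

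The paper does \emph{not} try to show that $\bar{\gamma}$ meets each circle-piece in at most one point by extracting a long common subword with a relation. Instead it uses the cut-point formulation recorded right after Proposition~\ref{prop_transversal}: it suffices that every point $x$ of $\bar{\gamma}$ separates $\bar{\gamma}$ into two halves lying in different components of the cone minus $x$. Arguing by contradiction, a failure of this gives a simple geodesic triangle $T=v_1v_2p$ in the cone with $p\subset\bar{\gamma}$. Such a triangle is the ultralimit of geodesic hexagons in $G$; Lemma~\ref{lem:nohexagons} forces these to be genuine triangles, and Proposition~\ref{prop:nodoublelines} forces their bases to sit on $\tilde{\mathcal{F}}$. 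Now Proposition~\ref{prop_L}(ii) says $|p|<\mu_i(|a|+|b|)$ with $\mu_i\to 0$, so in the rescaled limit the base has length $0$, contradicting simplicity of $T$. That is the whole argument; no conversion of ``$o_\omega(d_n)$-close'' into ``literal common subword of length $\Theta(\sigma_{k_n})$'' is ever needed.

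Your proposed route---intersecting $\bar{\gamma}$ with a circle $C$ and chasing a forbidden subword---can in principle be made to work, but the step you correctly identify as the obstacle (upgrading metric proximity to a long $\epsilon$-piece) is exactly what the combinatorics of $\mathcal{N}$ do not immediately give, and it would require reproving something close to Proposition~\ref{prop_L}(ii) anyway. Since you already cite Proposition~\ref{prop_L}(ii), Lemma~\ref{lem:nohexagons} and Corollary~\ref{cor:triangle}, the cleanest fix is to drop the subword argument and run the cut-point/triangle-degeneration argument directly: the circle contributes the detour between your two points $x,y$, producing the simple triangle with base on $\bar{\mathcal{F}}$, and then the three cited results finish as above.
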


\begin{proof}
We show that the ultralimit $\bar{\mathcal{F}}$ of $\tilde{\mathcal{F}}$ is a transversal tree in any asymptotic cone. Let $M$ be a bi-infinite path in $\tilde{\mathcal{F}}$ and let $q$ be its ultralimit in an asymptotic cone $\mathcal{A}$. We have to show 
\begin{enumerate}
 \item every point of $q$ is a cut-point in $\mathcal{A}$,
 \item every point of $q$ separates $q$ into two halves which lie in two different connected components.
\end{enumerate}

We notice that condition (2) implies (1). Assume there exists a point $x$ on $q$ which does not satisfy (2). Now assume that removing $x$ from $q$ does not break $q$ into two halves which lie in different connected components. Then there must exist a path $\gamma$ from a point $t$ on $q$ to a point $s$ on $q$, where $t,s$ lie on different sides of $x$. An asymptotic cone is a geodesic metric space, so in particular there is a point $y$ on $\gamma$ and geodesics $v_1=[t,y], v_2=[y,s]$. The path $q$ is a geodesic, we have a geodesic triangle $T=v_1v_2p$ where $p=[t,s]$ on $q$. The triangle $T$ must be the ultralimit of geodesic hexagons in $G$. By Lemma \ref{lem:nohexagons} it must be the limit of geodesic triangles in $G$. By Proposition \ref{prop:nodoublelines} the base line of these triangles must be contained in $\tilde{\mathcal{F}}$. However Proposition \ref{prop_L} states that the limit of such triangles with the base line in $\tilde{\mathcal{F}}$ is never a simple geodesic triangle in any asymptotic cone. 

Hence the point $x$ on $q$ must be a cut-point and be such, that the resulting two halves of $q$ lie in different connected components. This means that $\bar{\mathcal{F}}$ is a transversal tree in the asymptotic cone and hence by Proposition~\ref{prop_transversal} this implies that any bi-infinite path on it is a Morse geodesic in $G$.

\end{proof}

\begin{cor}
There exists a group $G$ such that every $g$ in $G$ has finite order and hence $G$ contains no Morse elements. But $G$ contains Morse geodesics. 
\end{cor}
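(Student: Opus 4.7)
The plan is simply to combine Theorem \ref{thm:morse} with a known construction of infinite torsion groups obtained as limits of graded small cancellation quotients starting from $F_4$. The technical content has already been carried out; what remains is to point to a specific $G$ satisfying both requirements at once.

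First, I would invoke the construction from \cite{osin_lacunary} (following the Ol'shanskii technique from \cite{olshanskii_residualing}) that produces a group $G = \lim_{i \to \infty} G_i$ with $G_0 = F_4$ where each quotient $G_i = G_{i-1}/\langle\!\langle \mathcal{R}_i\rangle\!\rangle$ is obtained by adding relations of the form $w^{n}$ for carefully chosen words $w$ and exponents $n$, in such a way that the graded small cancellation condition $\mathcal{Q}(0.01,10^6)$ of Definition \ref{def_graded_sc_gps} is satisfied. By arranging an enumeration $\{g_k\}$ of $F_4$ and, at stage $i$, killing a sufficiently high power of (the image of) $g_i$, one ensures that every element of $G$ has finite order. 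This is precisely the torsion version of the construction used in \cite{osin_lacunary} to obtain infinite, unbounded, lacunary hyperbolic torsion groups. I would simply cite this construction rather than reprove it.

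Once such a $G$ is in hand, the two conclusions follow at once. Since every $g \in G$ has finite order, no infinite cyclic subgroup embeds into $G$, so $G$ contains no Morse elements (Morse elements must have infinite order by definition). On the other hand, $G$ satisfies the hypotheses of Theorem \ref{thm:morse}, so it contains an isometrically embedded $7$-regular tree $T$ every bi-infinite simple path of which is a Morse geodesic; in particular $G$ contains Morse geodesics.

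The only place where anything could go wrong is the first step, namely verifying that the torsion-producing choice of relations $\mathcal{R}_i$ can be made compatibly with the $\mathcal{Q}(0.01,10^6)$ condition with $G_0 = F_4$. This is exactly the point addressed in \cite{osin_lacunary}, and I would reference that verification rather than redo it here. Once this is granted, the corollary is an immediate application of Theorem \ref{thm:morse} together with the observation that torsion groups possess no Morse elements.
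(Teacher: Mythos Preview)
Your proposal is correct and follows essentially the same approach as the paper's own proof: cite the torsion construction from \cite{osin_lacunary} to obtain a graded small cancellation group with $G_0=F_4$ in which every element has finite order, then apply Theorem~\ref{thm:morse} and observe that torsion elements cannot be Morse. Your version is simply more detailed in spelling out why the torsion construction is compatible with the $\mathcal{Q}(0.01,10^6)$ condition and the $G_0=F_4$ hypothesis, but the argument is the same.
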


\begin{proof}
In \cite{osin_lacunary} the authors construct infinite groups with unbounded torsion that satisfy the graded small cancellation condition. Hence these groups contain Morse geodesics by Theorem \ref{thm:morse} but on the other hand torsion elements can never be Morse elements. 
\end{proof}

\renewcommand{\bibname}{References}
\bibliography{bibliography.bib}        

\end{document}